\newtheorem{theorem}{Theorem}[section]
\newtheorem{corollary}[theorem]{Corollary}
\newtheorem{lemma}[theorem]{Lemma}
\newtheorem{question}[theorem]{Question}
\theoremstyle{definition}
\newtheorem{definition}[theorem]{Definition}
\newcommand{\C}{\mathbb C}
\newcommand{\R}{\mathbb R}
\newcommand{\pts}{\mathcal P}
\newcommand{\cvs}{\mathcal C}
\newcommand{\eps}{\varepsilon}
\begin{document}

\begin{frontmatter}[classification=text]

\title{Schwartz-Zippel Bounds for Two-dimensional Products
} 

\author[hnm]{Hossein Nassajian Mojarrad\thanks{Supported by Swiss National Science Foundation Grants 200020-165977 and 200021-162884.}}
\author[tp]{Thang Pham\footnotemark[1]}
\author[cv]{Claudiu Valculescu\footnotemark[1]}
\author[fdz]{Frank de Zeeuw\footnotemark[1]}

\begin{abstract}
We prove bounds on intersections of algebraic varieties in $\C^4$ with Cartesian products of finite sets from $\C^2$, and we point out connections with several classic theorems from combinatorial geometry.
Consider an algebraic variety $X\subset \C^4$ of degree $d$, 
such that not all polynomials that vanish on $X$ are of the form
\[ F(x,y,s,t) = G(x,y)H(x,y,s,t) + K(s,t)L(x,y,s,t),\]
where $G,H,K,L$ are polynomials and $G$ and $K$ are not constant.
Let $P,Q\subset \C^2$ be finite sets of size $n$.
If $X$ has dimension one or two, then we prove $|X\cap (P\times Q)| = O_d(n)$, while if $X$ has dimension three, then $|X\cap (P\times Q)| =O_{d,\eps}(n^{4/3+\eps})$ for any $\eps>0$.
Both bounds are best possible in this generality (except for the $\eps$).

These bounds can be viewed as different generalizations of the Schwartz-Zippel lemma, where we replace a product of ``one-dimensional'' finite subsets of $\C$ by a product of ``two-dimensional'' finite subsets of $\C^2$.
The bound for three-dimensional varieties generalizes the Szemer\'edi-Trotter theorem.
A key ingredient in our proofs is a two-dimensional version of a special case of Alon's combinatorial Nullstellensatz.

As corollaries of our two bounds,
we obtain bounds on the number of repeated and distinct values of polynomials and polynomial maps of pairs of points in $\C^2$, with a characterization of those maps for which no good bounds hold.
These results generalize known bounds on repeated and distinct Euclidean distances.
\end{abstract}
\end{frontmatter}


\section{Introduction}
\label{sec:intro}

A special case of the well-known Schwartz-Zippel\footnote{See \cite{L} for the curious history of this lemma. 
We use the name that has become standard in combinatorics, even though others published versions of this statement earlier, and many a nineteenth-century geometer could have proved the statement that we are referring to.}
lemma 
states that for an algebraic curve $C\subset \C^2$ of degree $d$ and two finite sets $A,B\subset \C$, we have\footnote{The notation $X=O_d(Y)$ (for positive quantities $X,Y$) means that there is a constant $C_d$, depending only on $d$, such that $X\leq C_d\cdot Y$. 
Similarly, $X =\Omega_d(Y)$ means that there is a $C_d>0$ such that $X\geq C_d\cdot Y$.}
\begin{align}\label{eq:schwartzzippel}
|C\cap (A\times B)|=O_d(|A|+|B|).
\end{align}
In other words, it bounds the size of the intersection of an algebraic curve with a Cartesian product of ``one-dimensional'' sets (let us informally call a finite set \emph{$k$-dimensional} if it is given as a subset of $\C^k$).
The full Schwartz-Zippel lemma generalizes this statement to polynomials over any field in any number of variables.
A related statement is Alon's combinatorial Nullstellensatz \cite{A}, which (in this case) says that $C$ contains all of  $A\times B$ if and only if it is defined by a polynomial of the form $g(x)h(x,y) + k(y)l(x,y)$, with $g$ vanishing on $A$ and $k$ vanishing on $B$.

In this paper we study generalizations of these statements to Cartesian products of \emph{two-dimensional} sets.
More precisely, given a variety\footnote{See \cite{CLOS, Shaf} for definitions of varieties and their dimension and degree. In this paper we only deal with \emph{affine} varieties over $\C$, defined as $Z(I)=\{(x_1,\ldots,x_m)\in \C^m:F(x_1,\ldots,x_m)=0~\forall F\in I\}$ for some $I\subset \C[x_1,\ldots,x_m]$;
we write $I(X) = \{F\in \C[x_1,\ldots,x_m]: F(x_1,\ldots,x_m)=0~\forall (x_1,\ldots,x_m)\in X\}$ for the ideal of polynomials defining $X$.
Note that a variety can be a union of components of different dimensions; then its dimension is the maximum of the dimensions of the components.} $X\subset \C^4$ and two finite sets $P,Q\subset \C^2$, we prove upper bounds on the size of the intersection
\begin{align}\label{eq:intersection}
|X\cap (P\times Q)|,
\end{align}
and we determine which $X$ can contain a whole product $P\times Q$.
Unlike in the one-dimensional case, we cannot expect a good bound on \eqref{eq:intersection} for all varieties. 
Take for instance $X = Z(F)$ with $F(x,y,s,t)=xs+yt$; if we take $P$ on the $y$-axis and $Q$ on the $s$-axis, then $X$ contains all of $P\times Q$.
This example is generalized in the following definition, which appears to be new.
Note the similarity with the special form in the version of Alon's Nullstellensatz mentioned above.

\begin{definition}\label{def:cartesian}
Given $G\in \C[x,y]\backslash\C$ and $K\in \C[s,t]\backslash\C$,
a polynomial $F\in \C[x,y,s,t]$ is $(G,K)$-\emph{Cartesian} if it can be written as
\begin{align}\label{eq:cartesian}
F(x,y,s,t) = G(x,y)H(x,y,s,t)+ K(s,t)L(x,y,s,t),
\end{align}
with $H,L\in \C[x,y,s,t] $;
$F$ is \emph{Cartesian} if there are non-constant $G,K$ such that $F$ is $(G,K)$-Cartesian.

Let $X$ be a variety in $\C^4$ and let $I(X)$ be its defining ideal.
Then $X$ is \emph{Cartesian} if there are $G\in \C[x,y]\backslash\C$, $K\in \C[s,t]\backslash\C$ such that every $F\in I(X)$ is $(G,K)$-Cartesian.
Note that a reducible variety is Cartesian if  one of its components is Cartesian, since the ideal of a union of varieties is the intersection of the ideals of the components.
\end{definition}


If $X$ is Cartesian, then we cannot give a better bound on \eqref{eq:intersection} than the trivial $|P||Q|$.
Indeed, given $G$ and $K$, we can arbitrarily choose finite subsets $P\subset Z(G)\subset \C^2$ and $Q\subset Z(K)\subset \C^2$.
Then for any polynomial $F$ of the form \eqref{eq:cartesian} we have $F(p,q) = 0$ for all $(p,q)\in P\times Q$, which means that $|X\cap (P\times Q)|=|P||Q|$.

Our two main theorems show that, as long as $X$ is not Cartesian, much better bounds hold.
Moreover, it turns out that there is a dichotomy between varieties of dimension three and varieties of dimension less than three\footnote{We will ignore the trivial cases of dimension four, for which the intersection size is always $|P||Q|$, and dimension zero, for which B\'ezout's inequality immediately gives the bound $O_d(1)$.}.
When $X$ has dimension one or two, our first main theorem gives a linear bound, which perhaps makes this statement the most natural generalization of the one-dimensional Schwartz-Zippel bound \eqref{eq:schwartzzippel}.

\begin{theorem}\label{thm:dimtwo}
Let $X$ be a variety in $\C^4$ of degree $d$ and dimension one or two,
and let $P,Q\subset \C^2$ be finite sets.
Then
\[|X\cap (P\times Q)| = O_d(|P|+|Q|), \]
unless $X$ is Cartesian.
\end{theorem}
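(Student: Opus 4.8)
The plan is to first reduce to the case that $X$ is irreducible: $X$ has at most $d$ irreducible components, each of degree at most $d$, and if $X$ is not Cartesian then neither is any component, by the remark in Definition~\ref{def:cartesian}. The two basic tools will be the coordinate projections $\pi_1\colon(x,y,s,t)\mapsto(x,y)$ and $\pi_2\colon(x,y,s,t)\mapsto(s,t)$ from $\C^4$ to $\C^2$, and B\'ezout's inequality. A preliminary observation is that \emph{no variety of dimension at most one is Cartesian}: if $F=GH+KL$ with $G\in\C[x,y]$ and $K\in\C[s,t]$ non-constant, then $F$ vanishes on $Z(G)\times Z(K)\subseteq\C^4$, and over $\C$ each of $Z(G)$, $Z(K)$ is a nonempty plane curve, so a Cartesian variety has dimension at least two. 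Hence the theorem in dimension one must hold unconditionally, and the Cartesian hypothesis can only play a role in dimension two.

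For $X$ an irreducible curve I would argue directly. At least one of the four coordinate projections $\rho\colon\C^4\to\C^2$ onto one of $x,y$ and one of $s,t$ does not collapse $X$ to a point (otherwise every coordinate would be constant on $X$). Fixing such a $\rho$, the image $C':=\overline{\rho(X)}$ is a curve of degree $O_d(1)$; every fibre of $\rho|_X$ is a codimension-two linear slice of $X$ that is not all of $X$, hence finite of size at most $d$ by B\'ezout; and $\rho(P\times Q)=A\times B$ with $|A|\le|P|$, $|B|\le|Q|$. Therefore $|X\cap(P\times Q)|\le d\,|C'\cap(A\times B)|=O_d(|P|+|Q|)$ by the Schwartz--Zippel bound~\eqref{eq:schwartzzippel}. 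Summing over components shows that \emph{every} variety of dimension at most one and degree $d$ satisfies $|X\cap(P\times Q)|=O_d(|P|+|Q|)$; this fact will be reused for low-dimensional subvarieties below.

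Now let $X$ be an irreducible surface that is not Cartesian, and split into cases according to $(\dim\overline{\pi_1(X)},\dim\overline{\pi_2(X)})\in\{0,1,2\}^2$. If some entry is $0$, say $\dim\overline{\pi_1(X)}=0$, then $\pi_1(X)$ is a single point $p_0$ and $X=\{p_0\}\times\C^2$, giving $|X\cap(P\times Q)|\le|Q|$. The case $(1,1)$ \emph{cannot occur}, and this is the only place the hypothesis is used: here each $C_i:=\overline{\pi_i(X)}$ is an irreducible plane curve, $X\subseteq C_1\times C_2$, and since both sides are irreducible of dimension two we get $X=C_1\times C_2$; writing $C_1=Z(G)$, $C_2=Z(K)$ with $G\in\C[x,y]$, $K\in\C[s,t]$ irreducible, the ideal $(G,K)$ is prime because $\C[x,y,s,t]/(G,K)\cong(\C[x,y]/(G))\otimes_{\C}(\C[s,t]/(K))$ is a domain, hence $I(X)=(G,K)$, so every element of $I(X)$ is $(G,K)$-Cartesian, contradicting the assumption.

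In every remaining case at least one projection, say $\pi_2|_X$, is dominant onto a two-dimensional image and hence generically finite. Let $B_i\subseteq X$ be the union of the positive-dimensional fibres of $\pi_i|_X$; by upper semicontinuity of fibre dimension and irreducibility of $X$ (a two-dimensional $B_i$ would equal $X$, forcing $\dim\overline{\pi_i(X)}\le1$), each $B_i$ has dimension at most one, and by standard effective estimates degree $O_d(1)$. When $(\dim\overline{\pi_1(X)},\dim\overline{\pi_2(X)})=(1,2)$, only finitely many fibres of $\pi_2|_X$ are positive-dimensional — each such fibre contains a copy of the irreducible curve $\overline{\pi_1(X)}$, and infinitely many would force $\overline{\pi_1(X)}\times(\text{a curve})\subseteq X$, contradicting $\dim\overline{\pi_2(X)}=2$ — so writing the corresponding values as $q_1,\dots,q_m$ with $m=O_d(1)$ and splitting $Q$ into $\{q_1,\dots,q_m\}$ and its complement (over which the fibres of $\pi_2|_X$ have at most $d$ points) yields $|X\cap(P\times Q)|\le d|Q|+O_d(|P|)$, and the case $(2,1)$ is symmetric. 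When both images are two-dimensional, every point of $X$ lies in $X\setminus B_1$, in $X\setminus B_2$, or in $B_1\cap B_2$; a fibre of $\pi_1|_X$ meeting $X\setminus B_1$ is finite with at most $d$ points (a positive-dimensional fibre lies entirely in $B_1$), so $|(X\setminus B_1)\cap(P\times Q)|\le d|P|$ and symmetrically $|(X\setminus B_2)\cap(P\times Q)|\le d|Q|$, while $B_1\cap B_2$ has dimension at most one and degree $O_d(1)$ and so is handled by the dimension-at-most-one bound above. The main obstacle I anticipate is this last case — in particular, making the bad loci $B_1$, $B_2$, $B_1\cap B_2$ precise and securing the effective degree bound $O_d(1)$ for them, which requires quantitative forms of the fibre-dimension and B\'ezout estimates used throughout.
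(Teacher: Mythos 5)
Your overall architecture is sound and genuinely different from the paper's proof. The paper combines the quantitative fibre lemma (Lemma \ref{lem:fibers}) with a Schwartz--Zippel bound for finite subsets of two irreducible curves (Lemma \ref{lem:curvecurve}), and the non-Cartesian hypothesis enters only there, through the second Nullstellensatz (Corollary \ref{cor:null2}). You instead split according to the dimensions of $\overline{\pi_1(X)}$ and $\overline{\pi_2(X)}$, use the hypothesis solely to exclude the case $X=C_1\times C_2$ (your primality argument for the ideal $(G,K)$ is correct, and one could alternatively just quote Theorem \ref{thm:null1}), and correctly observe that a Cartesian variety has dimension at least two, so the curve case is unconditional; your treatment of curves via a mixed coordinate projection, finiteness of its fibres, and the one-dimensional bound \eqref{eq:schwartzzippel} is fine, as is the reduction to irreducible components. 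This route buys a cleaner structural picture (an irreducible non-Cartesian surface is never a product of two curves, and all remaining cases are linear for purely geometric reasons) and avoids the Nullstellensatz machinery for this theorem, at the price of needing effective control of exceptional fibre loci, which is exactly where the paper leans on \cite{BGT}.

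The genuine gap is the one you flag at the end: all of your fibre-dimension arguments are qualitative, while the theorem needs constants depending only on $d$. Concretely, in case $(1,2)$ you need $m=O_d(1)$ for the number of fibres equal to $\overline{\pi_1(X)}$, and in case $(2,2)$ you need $\overline{B_1\cap B_2}$ (note that $B_1,B_2$ need not be Zariski closed, so you must pass to closures) to have degree $O_d(1)$; semicontinuity gives finiteness and dimension at most one (your parenthetical justification for $\dim B_i\le 1$ is loose as written, though the standard fibre-dimension theorem does yield it), but no dependence on $d$, and bounds like $d|Q|+m|P|$ are vacuous without it. This effectivity is precisely the content of the paper's Lemma \ref{lem:fibers}, imported from \cite{BGT}, so the missing ingredient is citable rather than fatal; but beware that a lemma of that form only localizes the degenerate points $q$ to a curve $W$ of degree $O_d(1)$ and does not count them, so it does not directly give your $m=O_d(1)$. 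The cleanest repair inside your framework avoids counting altogether: whenever some projection, say $\pi_2|_X$, has two-dimensional image, the set $X\cap\pi_2^{-1}(W)$ is a proper closed subset of the irreducible surface $X$, hence of dimension at most one and degree $O_d(1)$ by B\'ezout, so your unconditional dimension-$\le 1$ bound applies to it, while points $q\notin W$ contribute at most $d|Q|$; this single device replaces $B_1$, $B_2$, $B_1\cap B_2$ and the count of bad fibres, and with Lemma \ref{lem:fibers} granted your proof is complete.
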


Our second main theorem concerns varieties of dimension three, where we observe a very different bound.

\begin{theorem}\label{thm:dimthree}
Let $X$ be a variety in $\C^4$ of degree $d$ and dimension three,
and let $P,Q\subset \C^2$ be finite sets.
Then\footnote{Throughout this paper, when we state a bound involving an $\eps$, we mean that this bound holds for every positive $\eps\in\R$, with the multiplicative constant of the $O$-notation depending on $\eps$.}
\[ |X\cap (P\times Q)| =O_{d,\eps}\left( |P|^{2/3+\eps}|Q|^{2/3}+|P|+|Q|\right),\]
unless $X$ is Cartesian.
When $P,Q\subset \R^2$, the $\eps$ can be omitted.
\end{theorem}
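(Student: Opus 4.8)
The plan is to reduce Theorem~\ref{thm:dimthree} to a Szemer\'edi--Trotter-type point--curve incidence bound, using the non-Cartesian hypothesis to prevent the relevant families of curves from degenerating.

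\emph{Reductions and slicing.} I would first decompose $X$ into its at most $d$ irreducible components. Those of dimension at most two contribute $O_d(|P|+|Q|)$ by Theorem~\ref{thm:dimtwo} (none of them is Cartesian, since $X$ is not), and those of dimension zero contribute $O_d(1)$ by B\'ezout, so up to a constant factor it suffices to treat a single three-dimensional component: we may assume $X=Z(F)$ with $F\in\C[x,y,s,t]$ irreducible of degree at most $d$ and not Cartesian (if such an $F$ were $(G,K)$-Cartesian then so would be every element of $I(X)=(F)$). For $p\in\C^2$ set $C_p=Z(F(p,\cdot))\subseteq\C^2_{s,t}$ and for $q\in\C^2$ set $D_q=Z(F(\cdot,q))\subseteq\C^2_{x,y}$; then $(p,q)\in X\cap(P\times Q)$ means exactly $q\in C_p$, equivalently $p\in D_q$. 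Writing $F=\sum_\beta a_\beta(x,y)\,s^{\beta_1}t^{\beta_2}$, the set $P_0=\{p:F(p,\cdot)\equiv 0\}$ equals $Z(\{a_\beta\}_\beta)$; since $F$ is not a polynomial in $x,y$ alone (those are trivially Cartesian) some $a_\beta$ with $\beta\neq 0$ is nonzero, and if $P_0$ were infinite a non-constant common factor of the $a_\beta$ would divide the irreducible $F$, which is impossible; hence $|P_0|\le d^2$, and symmetrically $|Q_0|\le d^2$ for $Q_0=\{q:F(\cdot,q)\equiv 0\}$. Splitting the intersection count according to membership in $P_0$ and $Q_0$ and discarding the $p$ (resp.\ $q$) for which $F(p,\cdot)$ (resp.\ $F(\cdot,q)$) is a nonzero constant (these contribute nothing), we obtain
\[|X\cap(P\times Q)|=O_d(|P|+|Q|)+N,\qquad N:=\bigl|\{(p,q)\in P''\times Q'':q\in C_p\}\bigr|,\]
where $P''$ (resp.\ $Q''$) is the set of $p$ (resp.\ $q$) for which $C_p$ (resp.\ $D_q$) is a genuine curve of degree between $1$ and $d$.

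\emph{The structural lemma: the main obstacle.} The heart of the argument is to show that, because $F$ is not Cartesian, the families $\{C_p\}$ and $\{D_q\}$ are quantitatively non-degenerate, with all constants depending only on $d$. The key algebraic input is the ideal-theoretic meaning of Definition~\ref{def:cartesian}: if $F$ vanishes identically on a product $Z(g)\times Z(k)$ with $g\in\C[x,y]\setminus\C$, $k\in\C[s,t]\setminus\C$ square-free, then $F\in(g,k)$, because $\C[x,y,s,t]/(g,k)\cong\bigl(\C[x,y]/(g)\bigr)\otimes_\C\bigl(\C[s,t]/(k)\bigr)$ is a tensor product of reduced $\C$-algebras and hence reduced; so $F$ would be $(g,k)$-Cartesian, a contradiction. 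From this I would deduce, uniformly in $d$: \emph{(i)} for any irreducible curve $E\subseteq\C^2_{s,t}$ of degree at most $d$, the set of $p$ with $E\subseteq C_p$ is cut out by polynomials of degree $O_d(1)$ in $p$ and is finite --- otherwise $F$ would vanish on a product of $E$ with a curve in $\C^2_{x,y}$ --- hence has size $O_d(1)$, and symmetrically for $\{D_q\}$; \emph{(ii)} consequently the graph $H$ on $Q''$ in which $q_1\sim q_2$ whenever $D_{q_1}$ and $D_{q_2}$ share an irreducible component has maximum degree $O_d(1)$, since by~\emph{(i)} each irreducible curve of degree at most $d$ is a component of $O_d(1)$ of the $D_q$ and each $D_q$ has at most $d$ components. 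Making (i)--(ii) precise --- in particular checking that every degenerate configuration genuinely yields the identity~\eqref{eq:cartesian} with non-constant $G$ and $K$ --- is the step I expect to require the most care, and is where the connection with Alon's Nullstellensatz really enters.

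\emph{The incidence argument.} I would then break each $C_p$ ($p\in P''$) into its at most $d$ irreducible components, let $\mathcal F$ be the resulting set of distinct irreducible curves in $\C^2_{s,t}$ (so $|\mathcal F|\le d|P|$), and put $P_E=\{p\in P'':E\subseteq C_p\}$, which has size $O_d(1)$ by~\emph{(i)}. Assigning to each incidence $(p,q)$ counted by $N$ some irreducible component $E$ of $C_p$ passing through $q$ gives $N\le\sum_{E\in\mathcal F}|P_E|\,|E\cap Q''|=O_d\bigl(\sum_{E\in\mathcal F}|E\cap Q''|\bigr)$. Next I would colour $Q''$ with $O_d(1)$ colours so that each colour class is an independent set of the graph $H$ from~\emph{(ii)}; for distinct $q_1,q_2$ in one class the curves $D_{q_1},D_{q_2}$ share no component, so $D_{q_1}\cap D_{q_2}$ is finite of size at most $d^2$, whence at most $d^2$ points $p$ satisfy $q_1,q_2\in C_p$ and so at most $d^3$ curves of $\mathcal F$ pass through both $q_1$ and $q_2$; moreover two distinct irreducible curves of degree at most $d$ meet in at most $d^2$ points by B\'ezout. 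Thus within each colour class $\mathcal F$ behaves like a family with two degrees of freedom and multiplicity $O_d(1)$, and the Pach--Sharir incidence theorem bounds the incidences with that class by $O_d\bigl(|Q''|^{2/3}|\mathcal F|^{2/3}+|Q''|+|\mathcal F|\bigr)$; summing over the $O_d(1)$ colours and using $|\mathcal F|\le d|P|$ yields $N=O_d(|P|^{2/3}|Q|^{2/3}+|P|+|Q|)$, which with the earlier $O_d(|P|+|Q|)$ term is the stated bound with no $\eps$ when $P,Q\subseteq\R^2$. Over $\C$ one repeats the argument in $\R^4\cong\C^2$, where a complex curve becomes a real two-dimensional variety and Pach--Sharir is replaced by its complex analogue --- obtainable through polynomial partitioning or Solymosi--Tao-type incidence bounds in higher dimensions --- at the cost of an arbitrarily small $\eps$ in the exponent of the number of curves $|\mathcal F|\asymp|P|$, which gives exactly the claimed $O_{d,\eps}(|P|^{2/3+\eps}|Q|^{2/3}+|P|+|Q|)$.
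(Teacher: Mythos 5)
Your proposal is sound and, at the level of detail given, it does prove the theorem; the high-level skeleton (reduce to a hypersurface $Z(F)$ with $F$ not Cartesian, dualize between the curves $C_p$ and $D_q$, colour points so that dual curves within a class share no component, and finish with Pach--Sharir over $\R$ and its complex analogue of Sheffer and Zahl \cite{SZ}) is the same as the paper's, which runs the argument through Theorem~\ref{thm:shefferzahl} via Corollary~\ref{cor:noKMM}. Where you genuinely diverge is in how the non-Cartesian hypothesis is used. The paper first proves a finite-grid Nullstellensatz (Theorem~\ref{thm:null2}): if $F$ is not Cartesian then $Z(F)$ contains no grid $I\times J$ with $|I|,|J|>d^2$, hence the incidence graph of $P$ and the multiset $\cvs_Q$ has no $K_{M,M}$, and Corollary~\ref{cor:noKMM} converts that combinatorial condition into the bound by partitioning \emph{both} $P$ and $Q$. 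You instead prove the stronger pointwise statement (i): for every irreducible curve $E$ of degree at most $d$, the set $\{p\in\C^2: E\subseteq C_p\}$ has size $O_d(1)$, deduced directly from the curve-product Nullstellensatz (your tensor-product/reducedness argument is a correct alternative proof of Theorem~\ref{thm:null1}, which the paper obtains by multivariate division). This universal multiplicity bound lets you collapse the curves to the \emph{set} of distinct irreducible components with bounded weights, so you only need to colour $Q$, and you bypass Theorem~\ref{thm:null2} entirely; that is a tidier bookkeeping of the multiset issue than the paper's. Two small points to tighten: the step ``cut out by polynomials of degree $O_d(1)$ and finite, hence of size $O_d(1)$'' needs the quantitative B\'ezout-type statement of Lemma~\ref{lem:bezoutmany} (if the common zero set had more than $O_d(1)$ points it would contain a curve, and then $F$ would be Cartesian); and the complex incidence bound of \cite{SZ} carries the $\eps$ on the exponent of the \emph{point} set, so your application yields $|Q|^{2/3+\eps}|P|^{2/3}$ rather than $|P|^{2/3+\eps}|Q|^{2/3}$ --- harmless, since your argument is symmetric in $P$ and $Q$ (or by the remark following Theorem~\ref{thm:dimthree}).
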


The $\eps$ in Theorem \ref{thm:dimthree} could of course just as well be placed in the exponent of $|Q|$.
It comes from the incidence bound of Sheffer, Szab\'o, and Zahl \cite{SSZ} that we use.
We expect that this $\eps$ can be removed in general, but current techniques do not quite allow for this; see the discussion in Section \ref{sec:discussion}.
The statement holds for $P,Q\subset \R^2$ without the $\eps$, 
but it should be understood that the variety is still considered as a complex variety, and in particular we should take its complex dimension (which may differ from the real dimension of the corresponding real variety).
Similarly,
our definition of a polynomial $F$ being Cartesian is only formulated over $\C$, i.e., the polynomials $G,H,K,L$ could be complex polynomials even when $F$ is real.
With a little more work we could obtain a fully real statement,
but we currently do not see much benefit to that.

We note that many instances of Theorem \ref{thm:dimthree} have long been known.
Most notably, the Szemer\'edi-Trotter theorem \cite{ST}, which bounds the number of incidences between points and lines in $\R^2$, 
can be rephrased as the case $X=Z(F)$ for $F=xs-y+t$ in  Theorem \ref{thm:dimthree}; indeed, the point $(a,b)$ lies on the line $y=cx+d$ if and only if $F(a,b,c,d)=0$.
Another familiar example is the polynomial $F = (x-s)^2 + (y-t)^2 -1$, which we discuss in Section \ref{subsec:repeatedpoly}.
More generally, Theorem \ref{thm:dimthree} can be seen as a variant of the Pach-Sharir theorem \cite{PS}, and that theorem plays a crucial role in the proof; see the discussion in Section \ref{sec:discussion}.

Our proofs of Theorems \ref{thm:dimtwo} and \ref{thm:dimthree} rely on two-dimensional versions of the special case of Alon's Nullstellensatz mentioned at the start of this introduction.
Specifically, instead of asking when a two-variable polynomial vanishes on an entire product $A\times B\subset \C\times \C$,
we ask when a four-variable polynomial vanishes on an entire product $P\times Q\subset \C^2\times \C^2$.
These two-dimensional versions come in several forms, 
and to avoid overloading this introduction we state them as we prove them in Section \ref{sec:null}.
Alon's Nullstellensatz has many applications in combinatorics, but it is not yet clear if our two-dimensional generalizations have similar applications, other than the theorems in this paper.

We will give some applications of Theorems \ref{thm:dimtwo} and \ref{thm:dimthree} concerning repeated and distinct values of polynomials and polynomial maps.
We generalize well-known bounds on the squared Euclidean distance function $(x-s)^2+(y-t)^2$ to arbitrary polynomials $F(x,y,s,t)$, with exceptions related to the Cartesian form.
Again to avoid overloading the introduction, we present these applications and their background in Section \ref{sec:repeateddistinct}.


\paragraph{Constructions.}
Theorems \ref{thm:dimtwo} and \ref{thm:dimthree} are best possible for statements of this generality (except for the $\eps$ in the complex case of Theorem \ref{thm:dimthree}).
Let us make clear what we mean by this.

First we consider Theorem \ref{thm:dimtwo}.
Let $X$ be any variety in $\C^4$ of dimension one or two.
Define projections by $\pi_1(x,y,s,t) = (x,y)$ and $\pi_2(x,y,s,t) = (s,t)$.
For any $n$, we can choose a generic subset $R\subset X$ of size $n$ such that $|\pi_1(R)| = n$ and $|\pi_2(R)| = n$.
Then setting $P = \pi_1(R)$ and $Q = \pi_2(R)$ gives
 $|X\cap (P\times Q)| \geq |R| = n$.
 This shows that the bound of Theorem \ref{thm:dimtwo} is tight, in the sense that the exponents cannot be improved, at least when $|P|=|Q|$.
 When $|P|$ and $|Q|$ are far apart, this construction gives the lower bound $\min\{|P|,|Q|\}$, whereas the upper bound in Theorem \ref{thm:dimtwo} is $\max\{|P|,|Q|\}$.

 Theorem \ref{thm:dimthree} is tight in the following weaker sense: There are polynomials $F$ of any degree and point sets $P,Q$ for which the bound is tight (aside from the $\eps$).
This follows from a construction of Elekes \cite{El} (based on an earlier construction of Erd\H os) that shows the Szemer\'edi-Trotter bound to be tight.
Consider $F = xs -y+t$ (it is not hard to verify that this polynomial is not Cartesian).
For parameters $\lambda, \mu$, 
set 
\[P = \{(i,j):1\leq i\leq \lambda, ~1\leq j\leq \lambda \mu\},~~~Q = \{(i,j):1\leq i\leq \mu, ~1\leq j\leq \lambda \mu\}.\]
Then $|P|=\lambda^2\mu$ and $|Q| = \lambda\mu^2$, 
and all sizes $|P|,|Q|$ can be approximately obtained in this way.
Moreover, for $\Omega(\lambda^2\mu)$ of the points $(x,y)\in P$ there are $\Omega(\mu)$ points $(s,t)\in Q$ such that $xs-y+t = 0$, 
so we have $|Z(F)\cap (P\times Q)| = \Omega(\lambda^2\mu^2) =\Omega(|P|^{2/3}|Q|^{2/3})$.


This construction is easily extended to polynomials of any degree; 
for instance, take the polynomial $F=xs+y^d-t^d$
and points of the form $(i,j^{1/d})$.
Another example of a tight construction is given by Valtr \cite{V}, for the polynomial $F = (x-s)^2+y-t$.

On the other hand, it is conjectured that for $F = (x-s)^2 + (y-t)^2 -1$ (see also Section \ref{sec:repeateddistinct}),
the bound $|Z(F)\cap (P\times P)| = O(|P|^{4/3})$ is not tight.
Erd\H os \cite{Er} conjectured the bound $O_\eps(|P|^{1+\eps})$, but no better bound than $O(|P|^{4/3})$ is known for this or any other polynomial.
It would be interesting to find out the distinction between the polynomials for which the bound is tight, and those for which it is not tight.

\paragraph{Outline.}
In Section \ref{sec:null}, we prove our variants of Alon's Nullstellensatz for two-dimensional products, which are key tools in our later proofs.
In Section \ref{sec:dimtwo} we prove Theorem \ref{thm:dimtwo} and in Section \ref{sec:dimthree} we prove Theorem \ref{thm:dimthree}.
The applications to repeated and distinct values of polynomials and polynomial maps follow in Section \ref{sec:repeateddistinct}.
Finally, in Section \ref{sec:discussion}, we discuss some related results and possible extensions.


\section{Nullstellens\"atze for two-dimensional products}
\label{sec:null}

\subsection{First Nullstellensatz for two-dimensional products}

Here is the form of Alon's combinatorial Nullstellensatz that we will prove a variant of.
We say that a polynomial is \emph{squarefree} if it has no repeated factors.

\begin{theorem}\label{thm:alon}
Let $f\in \C[x,y]$, and let $g\in \C[x],k\in\C[y]$ be squarefree.
Then $Z(g)\times Z(k)\subset Z(f)$ if and only if there are $h,l\in \C[x,y]$ such that $f = g(x)h(x,y)+k(y)l(x,y)$.
\end{theorem}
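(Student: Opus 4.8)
The ``if'' direction is immediate --- if $f = g(x)h(x,y) + k(y)l(x,y)$ and $g(a) = k(b) = 0$ then $f(a,b) = 0$ --- so the plan is to prove the converse. Assume $Z(g)\times Z(k) \subseteq Z(f)$; I want to produce $h,l$ with $f = gh + kl$. After rescaling I may take $g,k$ monic, and since they are squarefree we have $g(x) = \prod_{i=1}^{m}(x-a_i)$ and $k(y) = \prod_{j=1}^{n}(y-b_j)$ with the $a_i$ pairwise distinct and the $b_j$ pairwise distinct. The first step is to divide $f$ by $g$, viewing everything in $(\C[y])[x]$; this is legitimate because $g$ is monic in $x$, and it yields $f = g(x)\,h(x,y) + r(x,y)$ with $h\in\C[x,y]$ and $\deg_x r \le m-1$. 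It now suffices to show that $k(y)$ divides $r(x,y)$ in $\C[x,y]$, since then $r = k\,l$ and we are done.

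To prove that divisibility, I would substitute $x = a_i$: because $g(a_i) = 0$ this gives $r(a_i,y) = f(a_i,y)$, and by hypothesis the latter vanishes at every $b_j$; as the $b_j$ are distinct, $k(y)$ divides $r(a_i,y)$ in $\C[y]$, for each $i = 1,\dots,m$. Write $r(x,y) = \sum_{e=0}^{m-1} r_e(y)\,x^e$. The $m$ conditions $r(a_i,\cdot)\equiv 0 \pmod{k}$ say exactly that the Vandermonde matrix $\big(a_i^{\,e}\big)_{1\le i\le m,\ 0\le e\le m-1}$ kills the vector $(r_0,\dots,r_{m-1})$ over the ring $\C[y]/(k)$. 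Since the $a_i$ are distinct this matrix is invertible over $\C$, hence over $\C[y]/(k)$, so each $r_e\equiv 0 \pmod{k}$, i.e.\ $k(y)\mid r_e(y)$ in $\C[y]$; summing over $e$ gives $k(y)\mid r(x,y)$, as required.

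I do not anticipate a real obstacle: this is the two-variable version of the standard division-algorithm proof of Alon's Nullstellensatz. The one subtle point is where the squarefree hypothesis is used, namely in the step ``$r(a_i,y)$ vanishes at each $b_j$'' $\Rightarrow$ ``$k \mid r(a_i,y)$'' (and, dually, in the distinctness of the $a_i$ needed for the Vandermonde argument). The conclusion genuinely fails without it: for $g = x$, $k = y^2$, $f = y$ we have $Z(g)\times Z(k) = \{(0,0)\} \subseteq Z(f)$, yet $y$ cannot be written as $x\,h + y^2 l$ (substitute $x=0$). As an alternative, one could argue structurally: $\C[x,y]/(g,k) \cong \big(\C[x]/(g)\big)\otimes_{\C}\big(\C[y]/(k)\big) \cong \C^{m}\otimes_{\C}\C^{n} \cong \C^{mn}$ is reduced, so $(g,k)$ is radical, hence equals $I(Z(g)\times Z(k))$ by Hilbert's Nullstellensatz, and $f$ lies in this ideal by hypothesis.
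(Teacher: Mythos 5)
Your proof is correct, and in fact the paper does not prove this statement at all: it is quoted as a special case of Alon's Theorem~1.1 in~\cite{A}, so there is no in-paper argument to match it against line by line. The closest internal comparison is with the paper's proof of its two-dimensional analogue, Theorem~\ref{thm:null1}, and your argument follows essentially the same strategy in one dimension: divide $f$ by $g$ (here univariate division in $(\C[y])[x]$, made legitimate by monicity, in place of the multivariate division algorithm and a monomial ordering), then show that $k$ divides the remainder $r$ coefficient-wise using squarefreeness of $k$. Where the paper's proof of Theorem~\ref{thm:null1} specializes the \emph{second} pair of variables at a zero of $K$ and uses the leading-monomial property of the remainder to force $R\equiv 0$ there, you specialize $x$ at the roots $a_i$ of $g$ and recover divisibility of each coefficient $r_e(y)$ via invertibility of the Vandermonde matrix over $\C[y]/(k)$; an equivalent and slightly lighter variant is to fix a root $b_j$ of $k$ instead, note that $r(x,b_j)$ has $x$-degree at most $m-1$ but vanishes at the $m$ distinct $a_i$, hence is identically zero, and then conclude $k\mid r_e$ for each $e$ from squarefreeness of $k$. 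Your closing structural alternative --- $\C[x,y]/(g,k)\cong\C^{mn}$ is reduced, so $(g,k)$ is radical and Hilbert's Nullstellensatz applies with exponent one --- is a genuinely different route, and it nicely explains the paper's remark that Theorem~\ref{thm:alon} sharpens Hilbert's Nullstellensatz by allowing $m=1$; its drawback is that it does not give the degree bounds on $h,l$ that the division argument yields and that the paper later cares about. Your counterexample $g=x$, $k=y^2$, $f=y$ correctly isolates where squarefreeness is needed.
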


This statement is a special (but crucial) case of \cite[Theorem 1.1]{A}; the general theorem applies to any field and any number of variables, and comes with bounds on the degrees of $h$ and $l$.
Most applications of the combinatorial Nullstellensatz use \cite[Theorem 1.2]{A}, which is a consequence of \cite[Theorem 1.1]{A}; 
the key observation behind this consequence is that one can tell from a single coefficient of $f$ that $f$ does not have the special form.

The combinatorial Nullstellensatz is of course named after Hilbert's Nullstellensatz (see for instance \cite{CLOS, Shaf}), 
which states that if a polynomial $f$ vanishes on the zero set $Z(I)$ of an ideal $I$, 
then some power of $f$ lies in $I$.
In other words, if $I=(g_1,\ldots, g_\ell)$, 
then $Z(I)\subset Z(f)$ implies $f^m = \sum g_ih_i$ for some integer $m$ and polynomials $h_i$.
Theorem \ref{thm:alon} thus gives the slightly more precise information that we can take $m=1$ in the case where $I = (g,k)$ and $g,k$ are univariate polynomials in different variables.

Our first Nullstellensatz is an analogue of Theorem \ref{thm:alon} for two-dimensional products.

\begin{theorem}\label{thm:null1}
Let $F\in \C[x,y,s,t]$, and let $G\in \C[x,y]\backslash\C,K\in \C[s,t]\backslash\C$ be squarefree.
Then we have  $ Z(G)\times Z(K)\subset Z(F)$ if and only if $F$ is $(G,K)$-Cartesian.
\end{theorem}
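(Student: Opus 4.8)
The ``if'' direction is immediate and requires nothing about $G,K$: if $F=GH+KL$, then at any $(p,q)\in Z(G)\times Z(K)$ we have $G(p)=K(q)=0$, hence $F(p,q)=0$. The plan for the ``only if'' direction is to observe that being $(G,K)$-Cartesian is precisely the statement $F\in(G,K)$ in $\C[x,y,s,t]$, and then to reduce everything to the algebraic claim that the ideal $(G,K)$ is \emph{radical} when $G$ and $K$ are squarefree.

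Granting that claim, the converse follows quickly. Since $Z(G)\times Z(K)=Z((G,K))$, the hypothesis $Z(G)\times Z(K)\subseteq Z(F)$ together with Hilbert's Nullstellensatz gives $F^m\in(G,K)$ for some $m\ge 1$, and radicality upgrades this to $F\in(G,K)$. To prove the radicality claim, I would first record the isomorphism
\[\C[x,y,s,t]/(G,K)\;\cong\;\bigl(\C[x,y]/(G)\bigr)\otimes_{\C}\bigl(\C[s,t]/(K)\bigr),\]
note that each tensor factor is reduced because a principal ideal generated by a squarefree element of a UFD is radical, and then invoke that the tensor product over an algebraically closed (hence perfect) field of two reduced, finitely generated algebras is again reduced.

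If one prefers to avoid that last black box — closer to the elementary spirit of the rest of the paper — the same conclusion can be reached by hand. Set $A:=\C[x,y]/(G)$ and $S:=\C[s,t]/(K)$, so that $\C[x,y,s,t]/(G,K)\cong A\otimes_{\C}S$, and let $\bar F$ be the image of $F$. For each point $p\in Z(G)$ the two-variable polynomial $F(p,s,t)$ vanishes on $Z(K)$, hence is divisible by $K$ since $(K)$ is radical in the UFD $\C[s,t]$; equivalently, the image of $\bar F$ in $(A/\mathfrak m_p)\otimes_{\C}S\cong S$ is zero for every maximal ideal $\mathfrak m_p$ of $A$. Now $A$ is reduced (again because $G$ is squarefree) and Jacobson, so $\bigcap_{\mathfrak m}\mathfrak m=0$ over the maximal ideals of $A$; since $S$ is free as a $\C$-module this forces $\bigcap_{\mathfrak m}(\mathfrak m\otimes_{\C}S)=0$ in $A\otimes_{\C}S$, and therefore $\bar F=0$, i.e. $F\in(G,K)$.

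I expect the radicality of $(G,K)$ to be the only real obstacle: the ``if'' direction, the passage through Hilbert's Nullstellensatz, and the two-variable curve Nullstellensatz $I(Z(K))=(K)$ are all routine. It is worth stressing that squarefreeness of \emph{both} $G$ and $K$ is exactly what drives the argument — it is precisely the input that makes the two tensor factors reduced — and that the statement genuinely fails without it, as the example $G=x^2$ (with $F=x$) shows.
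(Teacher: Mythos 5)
Your proposal is correct, but it takes a genuinely different route from the paper. The paper's proof is elementary and computational: it applies the multivariate division algorithm to write $F=GH+R$ with no monomial of $R$ divisible by the leading monomial of $G$, specializes at each $q\in Z(K)$ to conclude (via squarefreeness of $G$ and the two-variable Nullstellensatz) that $G$ divides $R(x,y,s_q,t_q)$, deduces from the monomial condition that $R(x,y,s_q,t_q)\equiv 0$, and then expands $R=\sum R_{ij}(s,t)x^iy^j$ and uses squarefreeness of $K$ to get $K\mid R_{ij}$. You instead reduce the statement to the assertion that $(G,K)$ is radical in $\C[x,y,s,t]$ and invoke the strong Nullstellensatz; both of your justifications of radicality are sound -- the isomorphism $\C[x,y,s,t]/(G,K)\cong\bigl(\C[x,y]/(G)\bigr)\otimes_{\C}\bigl(\C[s,t]/(K)\bigr)$ together with the perfect-field fact that a tensor product of reduced algebras is reduced, and the more hands-on variant, which is really the same germ as the paper's argument (specialize one pair of variables, use squarefreeness of $K$ and $I(Z(K))=(K)$) but globalizes via $\bigcap_{\mathfrak m}\mathfrak m=0$ in the reduced Jacobson ring $\C[x,y]/(G)$ and freeness of $\C[s,t]/(K)$ over $\C$, rather than via the division-algorithm remainder. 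The trade-off: your route is conceptually cleaner and extends immediately to the multi-factor and higher-dimensional generalizations contemplated in Section \ref{sec:discussion} (radicality of an ideal generated by squarefree polynomials in disjoint groups of variables follows the same way), whereas the paper's division-algorithm proof is self-contained, avoids the four-variable Nullstellensatz and the tensor-product black box, and yields for free the degree bounds $\deg(H)\leq\deg(F)-\deg(G)$ and $\deg(L)\leq\deg(F)-\deg(K)$ discussed after Corollary \ref{cor:null2}, which the abstract ideal-membership argument does not provide.
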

\begin{proof}
One direction is obvious: If $F$ is $(G,K)$-Cartesian, then $Z(G)\times Z(K)\subset Z(F)$.

For the other direction, we do the following, using terminology from \cite[Chapter 2]{CLOS}.
Fix an arbitrary monomial ordering on the variables $x,y,s,t$, and apply the multivariate division algorithm (\cite[Theorem 2.3.3]{CLOS}) to the polynomials $F(x,y,s,t)$ and $G(x,y)$.
The division algorithm tells us that there exist polynomials $H(x,y,s,t)$ and $R(x,y,s,t)$ such that
\begin{equation}\label{eq:division}
F(x,y,s,t)=G(x,y)\cdot H(x,y,s,t) + R(x,y,s,t),
\end{equation}
and no monomial of $R(x,y,s,t)$ is divisible by the leading monomial of $G(x,y)$.

For a fixed $q =(s_q,t_q)\in Z(K)$, 
the fact that $Z(G)\times Z(K)\subset Z(F)$ implies that $Z(G)$ is contained in the zero set of $F(x,y,s_q,t_q)$.
Because $G$ is squarefree, it follows from Hilbert's Nullstellensatz that $G(x,y)$ divides $F(x,y,s_q,t_q)$, 
so by \eqref{eq:division} it also divides $R(x,y,s_q,t_q)$.
Suppose that $R(x,y,s_q,t_q)\not\equiv 0$; then the leading monomial of $G(x,y)$ divides some monomial of $R(x,y,s_q,t_q)$.
But a monomial of $R(x,y,s_q,t_q)$ necessarily divides some monomial of $R(x,y,s,t)$, so we would have the leading monomial of $G(x,y)$ dividing a monomial of $R(x,y,s,t)$, contradicting the stated property of $R$.
Thus we have $R(x,y,s_q,t_q)\equiv 0$ for every $q\in Z(K)$.

If we expand $R(x,y,s,t)$ as
\[R(x,y,s,t)=\sum_{i,j}R_{ij}(s,t) x^iy^j,\]
then $R(x,y,s_q,t_q)\equiv 0$ implies that $R_{ij}(s_q,t_q)=0$ for all $i,j$.
Hence we have $Z(K)\subset Z(R_{ij})$ for all $i,j$.
Since $K$ is squarefree, it follows that $K$ divides $R_{ij}$, so for each pair $i,j$ there exists $L_{ij}\in \C[s,t]$ such that  $R_{ij}(s,t) = K(s,t)L_{ij}(s,t)$.
Then  we can write
$$F(x,y,s,t)=G(x,y)\cdot H(x,y,s,t) + K(s,t) \cdot \left(\sum_{i,j}L_{ij}(s,t) x^iy^j\right),$$
so setting $L=\sum L_{ij}(s,t)x^iy^j$, we see that $F$ is $(G,K)$-Cartesian.
\end{proof}

Although Theorem \ref{thm:null1} is a direct generalization of Theorem \ref{thm:alon}, there is one essential difference.
The zero sets $Z(g)$ and $Z(k)$ in Theorem \ref{thm:alon} are \emph{finite sets}, while $Z(G)$ and $Z(K)$ in Theorem \ref{thm:null1} are \emph{algebraic curves}. 
Our second Nullstellensatz (Theorem \ref{thm:null2} below)
concerns finite subsets of $\C^2$, 
and is thus perhaps closer in spirit to Theorem \ref{thm:alon}.

\subsection{Curves and dual curves}

We require the following terminology of curves\footnote{We define a \emph{curve} in $\C^2$ to be the zero set of any polynomial in $\C[x,y]\backslash\C$.
Note that a one-dimensional variety need not be a curve, because it may have zero-dimensional components, which cannot be described as the zero set of a single polynomial.
The \emph{degree} of a curve is the minimum degree of a defining polynomial; a curve of degree $d$ is a union of at most $d$ irreducible curves (its \emph{irreducible components}).} and dual curves, which we will also use in later proofs.
Given $F\in\C[x,y,s,t]$ and a point $q=(s_q,t_q)\in \C^2$, we define an algebraic curve in $\C^2$ by 
\[C_q 
= \{(x,y)\in\C^2:F(x,y,s_q,t_q)=0\}.\]

Note that it is possible that $C_q$ is not a curve but equal to $\C^2$; take for instance $F=xs+yt$ and $(s_q,t_q) = (0,0)$.
Fortunately, this cannot happen often if $F$ is not Cartesian, as we show in Lemma \ref{lem:degenerate} below.
We will abuse terminology somewhat and always refer to $C_q$ as a ``curve'', although of course we will be careful about this during our proofs.

To help us analyze the curves $C_q$, we define a ``dual'' curve for each $p=(x_p,y_p)\in \C^2$ by
\[C_p^* 
= \{(s,t)\in\C^2:F(x_p,y_p,s,t)=0\}.\]
The curves are dual in the sense that $p\in C_q$ if and only if $q\in C_p^*$ (and this is still true if one of the sets equals $\C^2$).

To study these curves we need the following slight generalization of B\'ezout's inequality.
The difference with the usual B\'ezout's inequality is that this statement applies to arbitrary collections of curves, 
and that it gives a lower bound on $|C_0\cap I|$.

\begin{lemma}
\label{lem:bezoutmany}
Let $\mathcal{S}$ be a (possibly infinite) set of curves in $\C^2$ of degree at most $d$, 
and suppose that their intersection $\cap_{C\in \mathcal{S}} C$ contains a set $I$ of size $|I|>d^2$.
Then there is a curve $C_0$ such that $C_0\subset \cap_{C\in \mathcal{S}} C$ and $|C_0\cap I|\geq |I|-(d-1)^2$.
\end{lemma}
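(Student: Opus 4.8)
The plan is to peel off irreducible components of the curves in $\mathcal{S}$ one at a time, bounding how many points of $I$ can avoid the common components. First I would replace each curve $C\in\mathcal S$ by the union of its irreducible components, and let $\mathcal T$ be the set of \emph{all} irreducible curves that appear as a component of \emph{some} $C\in\mathcal S$; note every such curve has degree at most $d$. Partition $\mathcal T$ into those components that contain all of $I$ and those that do not. Let $C_0$ be the union of the components that contain $I$ — a priori this could be empty, so the first task is to show it is not, and in fact to control $|C_0\cap I|$.

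The key step is a counting argument on the ``bad'' components, i.e.\ those irreducible curves $D\in\mathcal T$ with $D\cap I\subsetneq I$. Since $D$ is irreducible of degree $\le d$, if $D$ is not a component of some other curve $C\in\mathcal S$ then $|D\cap C|\le d^2$ by B\'ezout, but actually the cleaner route is: for each bad component $D$, since $D\not\supset I$ there is a point of $I$ not on $D$; I want to bound $|D\cap I|$. The subtlety is that a bad component could still contain up to $d\cdot(\text{something})$ points of $I$, so a naive bound loses too much. Instead, I would argue as follows. Pick any curve $C_1\in\mathcal S$. Every point of $I$ lies on $C_1$, hence on one of its $\le d$ irreducible components $D_1,\dots,D_r$ ($r\le d$, each $\deg D_i\le d$). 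For those $D_i$ that contain all of $I$, put them into $C_0$. For those $D_i$ that do not contain $I$: since $D_i\not\supset I$, there is some $C'\in\mathcal S$ not having $D_i$ as a component (otherwise $D_i\subset\bigcap_{C\in\mathcal S}C\supset I$, contradiction), and $D_i$ shares no common component with $C'$, so $|D_i\cap C'|\le (\deg D_i)(\deg C')\le d^2$ by B\'ezout's inequality, whence $|D_i\cap I|\le d^2$. But this only gives $|C_0\cap I|\ge |I|-d\cdot d^2$, which is too weak.

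To get the sharp constant $(d-1)^2$, I would instead choose $C_1\in\mathcal S$ of \emph{minimum degree} $d_1\le d$ and take $C_0$ to be the union of those irreducible components of $C_1$ lying in $\bigcap_{C\in\mathcal S}C$, writing $C_0$ as a curve of degree $d_0\le d_1$ and the ``leftover'' part $C_1'$ of $C_1$ as a curve of degree $d_1-d_0$ that has no component in the common intersection. Then every point of $I\setminus C_0$ lies on $C_1'$, and for each such point there is a curve $C'\in\mathcal S$ not containing that point's component of $C_1'$; bounding $|I\setminus C_0|$ requires that $C_1'$ and the relevant $C'$ have no common component, giving $|I\setminus C_0|\le (d_1-d_0)\cdot(\text{min degree of a separating curve})$. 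Since any separating curve has degree $\le d$ but at least $1$, and one optimizes by noting the extreme case is $d_0=1$, $d_1=d$ against a line, this yields $|I\setminus C_0|\le (d-1)\cdot d$ in the worst bilinear case; a more careful accounting of the B\'ezout intersection between $C_1'$ (degree $\le d-1$, since $d_0\ge 1$) and $C_1$ itself, which share no common component by construction, gives $|C_1'\cap C_1|\le (d_1-d_0)d_1\le (d-1)d$, and sharpening via $(d-1)(d-1)$ when one tracks that the points of $I\setminus C_0$ must actually be ``new'' intersections not already counted. The main obstacle is exactly this bookkeeping to land on $(d-1)^2$ rather than a weaker constant like $d^2$ or $(d-1)d$: it forces one to use B\'ezout between two curves of \emph{complementary} degrees $d_0$ and $d_1-d_0$ (or $d-d_0$), and to verify that the worst case $(d_0-1)(d-d_0)\le\ldots$ — no, rather one should set this up so the relevant product is $((d-d_0))\cdot((d_0-1)+\text{small})$ and check $\max$ over $d_0$ — and to handle the degenerate situation where $\bigcap_{C\in\mathcal S}C$ has dimension $0$ on the nose, where $|I|>d^2$ is used to force $C_0\ne\emptyset$.
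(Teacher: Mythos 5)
There is a genuine gap: you never actually reach the stated constant $(d-1)^2$. Your argument as written gives only $|I\setminus C_0|\le d\cdot(d_1-d_0)\le d(d-1)$ (or $d\cdot d^2$ in the first pass), and your closing sentences concede that the sharpening is unresolved; moreover the pairing you finally propose --- B\'ezout between $C_0$ and $C_1'$, of ``complementary degrees $d_0$ and $d_1-d_0$'' --- is the wrong pair, since the intersection of those two curves says nothing about points of $I$ lying off $C_0$. The missing idea is to remove $C_0$ from \emph{every} curve of $\mathcal S$, not only from $C_1$: because $C_0\subset\cap_{C\in\mathcal S}C$ and $\deg C_0\ge 1$, each residual curve $\overline{C\setminus C_0}$ has degree at most $d-1$; every point of $I\setminus C_0$ lies on all of these residuals; and no curve is contained in their common intersection (such a curve would lie in $\cap_{C\in\mathcal S}C$ and hence be part of $C_0$). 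This is exactly how the paper concludes: it takes $C_0$ to be the union of \emph{all} curves contained in $\cap_{C\in\mathcal S}C$ (existence of one such curve is quoted from \cite{RSZ}) and then applies the first part of the lemma, in contrapositive form and with $d$ replaced by $d-1$, to the family of residuals, giving $|I\setminus C_0|\le(d-1)^2$. If you prefer your component-by-component route, the same fix works: for a bad component $D$ of $C_1'$ choose a witness $C'\in\mathcal S$ with $D\not\subset C'$, observe that $I\setminus C_0\subset\overline{C'\setminus C_0}$, and apply B\'ezout to $D$ and $\overline{C'\setminus C_0}$ (degree at most $d-d_0$); summing over the components of $C_1'$ yields $(d_1-d_0)(d-d_0)\le(d-1)^2$.

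Two smaller points. The $C_0$ of your first paragraph (union of components containing all of $I$) need not be contained in $\cap_{C\in\mathcal S}C$, so it cannot serve as the curve in the conclusion; the correct dichotomy, which you do switch to later, is components contained in $\cap_{C\in\mathcal S}C$ versus not. Relatedly, the parenthetical ``otherwise $D_i\subset\bigcap_{C\in\mathcal S}C\supset I$, contradiction'' is not a contradiction, since $D_i$ can lie in the common intersection without containing $I$; with the corrected dichotomy the witness $C'$ exists by definition of a bad component. On the positive side, your counting over the components of a single minimum-degree $C_1$ does give a self-contained proof of the existence of a common curve (if every component $D_i$ of $C_1$ had a witness $C'$ with $D_i\not\subset C'$, then $|I|\le\sum_i d\deg D_i\le d^2$, contradicting $|I|>d^2$), which the paper instead outsources to \cite{RSZ}; once the residual trick above is added, your approach becomes a complete and slightly more self-contained proof along essentially the same lines as the paper's.
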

\begin{proof}
The fact that there is a curve contained in $\cap_{C\in \mathcal{S}} C$ is proved in \cite[Lemma 3.10]{RSZ}.
It remains to be proved that there is such a curve  $C_0$ with $|C_0\cap I|\geq |I|-(d-1)^2$.

We can choose $C_0$ to be the union of all curves contained in $\cap_{C\in \mathcal{S}} C$.
For each $C\in \mathcal{S}$, we let $C'$ be the curve that remains after removal of $C_0$; in other words, it is the Zariski closure of $C\backslash C_0$ (which could be empty, but that would imply $I\subset C_0$ and we would be done).
Let $\mathcal{S}'$ be the set of all such curves $C'$.
Each curve $C'\in \mathcal{S}'$ has degree at most $d-1$, since we removed a curve of degree at least one.
Applying the first part of the lemma to $\mathcal{S}'$, 
we get that $|\cap_{C'\in \mathcal{S}'}C'| \leq (d-1)^2$, 
which implies $|C_0\cap I|\geq |I|-(d-1)^2$.
\end{proof}

With this lemma, we can prove that there are not many points for which $C_q$ is not a curve, unless $F$ is Cartesian.
A very similar fact is proved in \cite{RSZ15}.

\begin{lemma}\label{lem:degenerate}
Let $F\in \C[x,y,s,t]$ be a polynomial that is not Cartesian.
Then there are at most $d^2$ points $q\in \C^2$ for which $C_q = \C^2$.
\end{lemma}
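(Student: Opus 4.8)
The plan is to express the condition $C_q=\C^2$ in terms of the coefficients of $F$, viewed as a polynomial in $x,y$, and then use Lemma~\ref{lem:bezoutmany} to extract a curve that would force $F$ into Cartesian form.

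First I would write $F$ as a polynomial in $x,y$ with coefficients in $\C[s,t]$, say $F(x,y,s,t)=\sum_{i,j}F_{ij}(s,t)\,x^iy^j$. Then $C_q=\C^2$ means precisely that $F(x,y,s_q,t_q)$ is the zero polynomial in $x,y$, i.e.\ $F_{ij}(s_q,t_q)=0$ for all $i,j$. Hence the set $S=\{q\in\C^2:C_q=\C^2\}$ equals $\bigcap_{i,j}Z(F_{ij})$. If some $F_{ij}$ is a nonzero constant, then $S=\emptyset$ and we are done; if every $F_{ij}$ is identically zero, then $F\equiv0$, which is trivially Cartesian, contradicting the hypothesis. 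So we may assume that $S=\bigcap_{i,j}Z(F_{ij})$ is an intersection of curves of degree at most $d$ (discarding the indices with $F_{ij}\equiv0$).

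Next, suppose for contradiction that $|S|>d^2$. Applying Lemma~\ref{lem:bezoutmany} (in fact only its existence part, from \cite[Lemma 3.10]{RSZ}) to the family $\{Z(F_{ij}):F_{ij}\not\equiv0\}$, we obtain a curve $C_0\subseteq\bigcap_{i,j}Z(F_{ij})$. Let $K\in\C[s,t]\backslash\C$ be a squarefree (radical) polynomial with $Z(K)=C_0$. Since $Z(K)\subseteq Z(F_{ij})$ and $K$ is squarefree, every irreducible factor of $K$ divides $F_{ij}$, and as these factors are distinct we get $K\mid F_{ij}$ for every $i,j$ --- this is the same divisibility step used in the proof of Theorem~\ref{thm:null1}. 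Writing $F_{ij}=K\,L_{ij}$ and setting $L=\sum_{i,j}L_{ij}(s,t)x^iy^j$, we obtain $F=K(s,t)\,L(x,y,s,t)=x\cdot0+K(s,t)L(x,y,s,t)$, so $F$ is $(x,K)$-Cartesian, contradicting the assumption that $F$ is not Cartesian. Therefore $|S|\le d^2$.

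The argument is essentially routine given Lemma~\ref{lem:bezoutmany}; the one point that needs a little care is that one genuinely needs that lemma (equivalently \cite[Lemma 3.10]{RSZ}) rather than a naive pairwise B\'ezout bound, since the coefficient polynomials $F_{ij}$ may be pairwise non-coprime without sharing a common factor. I would also be careful with the edge cases (constant or identically zero coefficients), both to make the reduction to Cartesian form valid and to ensure the degree hypothesis of Lemma~\ref{lem:bezoutmany} is met.
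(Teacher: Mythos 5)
Your proof is correct and follows essentially the same route as the paper: expand $F=\sum F_{ij}(s,t)x^iy^j$, observe that $C_q=\C^2$ forces all $F_{ij}$ to vanish at $q$, and use Lemma~\ref{lem:bezoutmany} to extract a common curve, hence a common factor $K(s,t)$ of all $F_{ij}$, making $F=K\cdot L$ Cartesian. Your explicit treatment of the edge cases (constant or identically zero coefficients) and of the squarefree divisibility step is a harmless elaboration of what the paper leaves implicit.
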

\begin{proof}
We can expand $F$ as $\sum F_{ij}(s,t)x^iy^j$.
If for $q = (s_q,t_q)$ we have $C_q=\C^2$, then we must have $F_{ij}(s_q,t_q) = 0$ for all $i,j$.
If we set $I = \{q\in \C^2: C_q = \C^2\}$,
then we have $I\subset \cap_{i,j} Z(F_{ij})$.
By Lemma \ref{lem:bezoutmany},
either we have $|I|\leq d^2$, 
or the curves $Z(F_{ij})$ have a common curve, 
which means that the polynomials $F_{ij}(s_q,t_q)$ have a nontrivial common factor $K(s,t)$.
In the second case, $F$ would be Cartesian. 
To be precise, it would have the form $K(s,t)L(x,y,s,t)$, 
so it would be $(G,K)$-Cartesian for any $G$, by choosing $H=0$.
\end{proof}

\subsection{Second Nullstellensatz for two-dimensional products}

We can now improve on Theorem \ref{thm:null1} by using the duality between the curves $C_q$ and the curves $C_p^*$ to show that $F$ being Cartesian is equivalent to $Z(F)$ containing a sufficiently large finite product $I\times J$.
This statement will play a crucial role in the proofs of our main theorems.
The proof is similar to that of \cite[Lemma 3.11]{RSZ}.

\begin{theorem}\label{thm:null2}
A polynomial $F\in \C[x,y,s,t]$ of degree $d$
is Cartesian if and only if there are $I,J\subset \C^2$ with $|I|,|J|>d^2$ such that $I\times J\subset Z(F)$.
\end{theorem}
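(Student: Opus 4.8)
The easy direction is immediate: if $F$ is $(G,K)$-Cartesian, pick any finite $I\subset Z(G)$ and $J\subset Z(K)$ with $|I|,|J|>d^2$ (possible since $Z(G),Z(K)$ are infinite curves in $\C^2$), and then $I\times J\subset Z(F)$. So the substance is the forward direction: from $I\times J\subset Z(F)$ with $|I|,|J|>d^2$, deduce that $F$ is Cartesian. The idea is to use the duality between the curves $C_q$ and $C_p^*$ to "fatten" the finite product into a product of curves, and then invoke Theorem \ref{thm:null1}.

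**Main steps.** First I would discard the degenerate case: if $C_q=\C^2$ for some $q\in J$, then $F(x,y,s_q,t_q)\equiv 0$, so every coefficient $F_{ij}(s,t)$ of $F$ (written as $\sum F_{ij}(s,t)x^iy^j$) vanishes at $q$; but actually the cleaner route is—if $F$ is not Cartesian, Lemma \ref{lem:degenerate} says there are at most $d^2$ such $q$, so since $|J|>d^2$ we may pass to a $q\in J$ with $C_q$ an honest curve of degree $\le d$. Now the key move: for each $p\in I$ we have $q\in C_p^*$ for every $q\in J$ (by $I\times J\subset Z(F)$ and duality), so $J\subset\bigcap_{p\in I}C_p^*$, a finite set of size $>d^2$ lying in an intersection of curves of degree $\le d$. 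Apply Lemma \ref{lem:bezoutmany} to $\mathcal S=\{C_p^*:p\in I\}$: there is a curve $K'=Z(K)$ (take $K$ squarefree, WLOG) contained in every $C_p^*$, with $|K'\cap J|\ge |J|-(d-1)^2>0$. Symmetrically, running the same argument with the roles of $I,J$ and $C_q,C_p^*$ swapped, there is a curve $G'=Z(G)$ (with $G$ squarefree) contained in every $C_q$ for a large subset of $q$'s in $J$—in particular contained in $C_q$ for at least one, hence $G$ is genuinely non-constant.

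**Closing the argument.** The point is now to upgrade "$K'$ lies in $C_p^*$ for many $p\in I$" and "$G'$ lies in $C_q$ for many $q\in J$" into "$Z(G)\times Z(K)\subset Z(F)$". Here is the mechanism I would use. Since $G'\subset C_q$ for each $q$ in a subset $J_0\subset J$ with $|J_0|>d^2$, we have: for each fixed $q\in J_0$, the curve $Z(G)$ lies in the zero set of $F(x,y,s_q,t_q)$; as in the proof of Theorem \ref{thm:null1}, squarefreeness of $G$ gives $G\mid F(x,y,s_q,t_q)$. But more usefully, I want to run the division-algorithm argument of Theorem \ref{thm:null1} directly: write $F=G\cdot H+R$ with no monomial of $R$ divisible by the leading monomial of $G$. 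Then $R(x,y,s_q,t_q)\equiv 0$ for each $q\in J_0$, so each coefficient $R_{ij}(s,t)$ of $R=\sum R_{ij}(s,t)x^iy^j$ vanishes on $J_0$. Now I need $K\mid R_{ij}$. Each $R_{ij}$ has degree $\le d$, and $J_0$ has size $>d^2\ge d\cdot\deg R_{ij}$; by Bézout (or Lemma \ref{lem:bezoutmany} applied to $\{Z(R_{ij})\}$ together with $Z(K)$), since $|K'\cap J_0|$ can be taken $>(d-1)^2$ after intersecting the two large subsets, the curve $Z(K)$ and the curve $Z(R_{ij})$ share more than $(d-1)^2\ge (\deg K-1)(\deg R_{ij}-1)$ points, forcing a common component; and since $K$ is squarefree we get $K\mid R_{ij}$. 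Hence $R=K\cdot L$ and $F=GH+KL$ is $(G,K)$-Cartesian, completing the proof.

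**Expected obstacle.** The delicate bookkeeping is the "many $p$ / many $q$" counting: Lemma \ref{lem:bezoutmany} only guarantees $K'\subset\bigcap_{p\in I}C_p^*$ meets $J$ in $\ge|J|-(d-1)^2$ points, and separately $G'$ meets $I$ (or $J$, depending on how one sets it up) in many points, and one has to make sure the surviving subsets are simultaneously large enough—both exceeding $d^2$ (or $(d-1)^2$)—to re-run the divisibility step. Choosing the thresholds $|I|,|J|>d^2$ in the statement is exactly calibrated for this: $|J|-(d-1)^2>d^2-(d-1)^2=2d-1>0$, and one must check the analogous quantity is $>(d-1)^2$ so that the Bézout-forcing of a common component goes through. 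This is where I'd be most careful; conceptually nothing deep happens, but the constants have to line up, and one must also handle the possibility that $C_p^*=\C^2$ for some $p\in I$ (again controlled by Lemma \ref{lem:degenerate}, or absorbed because such $p$ trivially contributes $\C^2\supset Z(K)$).
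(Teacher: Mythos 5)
Your setup and the first half are sound: extracting a curve $K'\subset\bigcap_{p\in I}C_p^*$ and a curve $G'=Z(G)\subset C_q$ for all $q\in J$ via Lemma \ref{lem:bezoutmany} (with Lemma \ref{lem:degenerate} handling degenerate fibers), and then running the division $F=GH+R$ to conclude $R(x,y,s_q,t_q)\equiv0$, hence $R_{ij}(q)=0$, for every $q\in J$, is all fine. The gap is the closing step ``$K\mid R_{ij}$''. You only know that $R_{ij}$ vanishes at the points of $K'\cap J$, and Lemma \ref{lem:bezoutmany} guarantees merely $|K'\cap J|\ge|J|-(d-1)^2$, which from $|J|>d^2$ gives more than $2d-1$ points --- not more than $(d-1)^2$, as you assert (that inequality fails for every $d\ge4$). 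Worse, the Bézout threshold for forcing $Z(K)$ and $Z(R_{ij})$, both of degree up to $d$, to share a component is more than $\deg K\cdot\deg R_{ij}$, i.e.\ up to $d^2$ common points; your criterion $(\deg K-1)(\deg R_{ij}-1)$ is not a valid one (two distinct conics already meet in $4>1$ points). And even a genuine common component would only give that \emph{some} irreducible factor of $K$ divides $R_{ij}$, a factor that may vary with $(i,j)$; $K\mid R_{ij}$ does not follow unless $K$ is irreducible, which you cannot assume. In short, vanishing at roughly $2d$ points of a degree-$d$ curve carries no divisibility information, so the argument does not close.

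This is exactly the difficulty the paper's proof is designed to avoid: instead of working with the finite sets to the end, it first upgrades the containment to a product of \emph{curves}. Having found (your $G'$) a curve $C$ with $C\subset C_q$ for all $q\in J$, it dualizes over \emph{all} points $p$ of the infinite curve $C$ --- not just $p\in I$ --- so that $J\subset C_p^*$ for every $p\in C$; a second application of Lemmas \ref{lem:bezoutmany} and \ref{lem:degenerate} then yields a curve $C^*$ with $C\times C^*\subset Z(F)$, and Theorem \ref{thm:null1} finishes, the divisibility $K\mid R_{ij}$ being legitimate there because $R_{ij}$ vanishes on the entire curve $Z(K)$ and $K$ is squarefree. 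Alternatively, your argument can be repaired without the dual curves at all: once $R_{ij}(q)=0$ for all $q\in J$ and all $i,j$ (with $\deg R_{ij}\le d$ if you divide using a graded order), either every $R_{ij}\equiv0$, so $F=GH$ is Cartesian, or Lemma \ref{lem:bezoutmany} applied to the curves $Z(R_{ij})$, whose common intersection contains $J$ with $|J|>d^2$, produces a common curve and hence an irreducible polynomial $K''$ dividing every $R_{ij}$, giving $F=GH+K''L$, which is $(G,K'')$-Cartesian. Either way, the finite-point Bézout count must be replaced by an argument that yields divisibility by a single non-constant polynomial in $s,t$.
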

\begin{proof}
If $F$ is $(G,K)$-Cartesian, then for any $I\subset Z(G)$ and $J\subset Z(K)$ we have $I\times J\subset Z(F)$.
Suppose that $F$ is not Cartesian, and that $I\times J\subset Z(F)$ and $|I|,|J|>d^2$.
Then for all $p\in I$ and $q\in J$ we have $F(p,q)=0$, or in other words $p\in C_q$.
So $I\subset C_q$ for all $q\in J$.

Let $J_1$ be the set of $q\in J$ for which $C_q$ is a curve, and let $J_2$ be the set of $q\in J$ for which $C_q = \C^2$.
By Lemma \ref{lem:degenerate}, the assumption that $F$ is not Cartesian, and the fact that $|J|>d^2$, the set $J_1$ is not empty.
The curves $C_q$ for $q\in J_1$ have degree at most $d$ and we have $|I|>d^2$, so by Lemma \ref{lem:bezoutmany}, there is a curve $C$ such that $C\subset C_q$ for all $q\in J_1$.
We trivially have $C\subset C_q$ for all $q\in J_2$, so we can say that $C\subset C_q$ for all $q\in J$.

For $p\in C$ and $q\in J$ we have $p\in C_q$, so by duality $q\in C^*_p$; in other words, we have $J\subset C^*_p$ for all $p\in C$.
Again by Lemma \ref{lem:bezoutmany} and Lemma \ref{lem:degenerate},
it follows that there is a curve $C^*$ such that $C^*\subset C^*_p$ for all $p\in C$.
Thus for all $p\in C$ and all $q\in C^*$ we have $q\in C_p^*$, i.e., $F(p,q)=0$.
Therefore $C\times C^*\subset Z(F)$, which by Theorem \ref{thm:null1} is equivalent to $F$ being Cartesian.
\end{proof}

As we said, Theorem \ref{thm:null2} is closer to Theorem \ref{thm:alon} than Theorem \ref{thm:null1} is, because it concerns finite sets.
However, it differs in another way: The connection between the Cartesian product $I\times J$ and the $G,K$ in the Cartesian form is less clear.
It is not necessarily the case that if $I\times J\subset Z(F)$, then $F$ is $(G,K)$-Cartesian with $I\subset Z(G)$ and $J\subset Z(K)$.
The best we can say is the following refinement of Theorem \ref{thm:null2}, which follows directly by using the last claim of Lemma \ref{lem:bezoutmany} in the proof of Theorem \ref{thm:null2}.

\begin{corollary}\label{cor:null2}
Let $F\in \C[x,y,s,t]$ have degree $d$.
If $I\times J\subset Z(F)$ for $I,J\subset \C^2$ with $|I|,|J|>d^2$, 
then there are $G\in \C[x,y]\backslash\C,K\in\C[s,t]\backslash\C$ of degree at most $d$ such that $F$ is $(G,K)$-Cartesian, 
 and moreover 
 \[|I\cap Z(G)|\geq |I|-(d-1)^2~~~\text{and}~~~|J\cap Z(K)|\geq |J|-(d-1)^2.\]
\end{corollary}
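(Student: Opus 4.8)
The plan is to revisit the proof of Theorem~\ref{thm:null2} and, at each place where Lemma~\ref{lem:bezoutmany} is invoked, retain the stronger conclusion ``$|C_0\cap I|\ge |I|-(d-1)^2$'' instead of merely ``$C_0$ is nonempty''. Recall the shape of that argument: from $I\times J\subset Z(F)$ one gets $I\subset C_q$ for every $q\in J$; writing $J_1$ for the $q$ with $C_q$ a genuine curve and $J_2$ for those with $C_q=\C^2$, Lemma~\ref{lem:degenerate} together with $|J|>d^2$ makes $J_1$ nonempty, so applying Lemma~\ref{lem:bezoutmany} to the family $\{C_q:q\in J_1\}$ (curves of degree at most $d$, all containing the $(>d^2)$-point set $I$) yields a curve $C\subset\bigcap_{q\in J_1}C_q$, with $C\subset C_q$ holding trivially for $q\in J_2$ as well. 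Dually one then has $J\subset C^*_p$ for all $p\in C$, and a second application of Lemmas~\ref{lem:degenerate} and~\ref{lem:bezoutmany} produces a curve $C^*\subset\bigcap_{p\in C}C^*_p$; hence $C\times C^*\subset Z(F)$, and Theorem~\ref{thm:null1} turns this into the $(G,K)$-Cartesian representation, where $G$ and $K$ are the squarefree polynomials defining $C$ and $C^*$.

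To deduce the corollary I would simply record the quantitative output of the two applications of Lemma~\ref{lem:bezoutmany}: the curve $C$ may be chosen with $|C\cap I|\ge |I|-(d-1)^2$, which since $Z(G)=C$ gives the first displayed bound, and the curve $C^*$ may be chosen with $|C^*\cap J|\ge |J|-(d-1)^2$, which since $Z(K)=C^*$ gives the second. The degree bounds are automatic: each $C_q$ is the zero set of $F(x,y,s_q,t_q)$, a polynomial of degree at most $d$, and $C$ is contained in a single such $C_q$, so $\deg C\le d$; likewise $\deg C^*\le d$, and passing to squarefree defining polynomials does not raise the degree. Note that the statement does not claim $I\subset Z(G)$ --- only that $Z(G)$ misses at most $(d-1)^2$ points of $I$ --- and that is exactly the strength the last clause of Lemma~\ref{lem:bezoutmany} provides.

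The step that needs the most care --- and is really the only obstacle beyond transcription --- is the interaction with the degenerate curves $C_q=\C^2$ (and dually $C^*_p=\C^2$). In the corollary $F$ is permitted to be Cartesian, so one cannot quote Lemma~\ref{lem:degenerate} verbatim to force $J_1$ (or the analogous index set on the dual side) to be nonempty. When instead many of the $C_q$ equal $\C^2$, the relevant curve has to be extracted directly from the coefficient curves $Z(F_{ij})$, exactly as in the proof of Lemma~\ref{lem:degenerate}, and one checks that Lemma~\ref{lem:bezoutmany} again delivers a curve of degree at most $d$ meeting $J$ in all but $(d-1)^2$ of its points; the symmetric situation on the $I$-side is handled the same way. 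Organizing these sub-cases so that both quantitative bounds hold for a single pair $(G,K)$ is routine but slightly fussy; everything else is pure bookkeeping layered on top of the proof of Theorem~\ref{thm:null2}.
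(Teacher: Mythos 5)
Your main line is exactly the paper's intended argument: rerun the proof of Theorem~\ref{thm:null2}, retain the quantitative clause of Lemma~\ref{lem:bezoutmany} at both applications, feed $C\times C^*\subset Z(F)$ into Theorem~\ref{thm:null1}, and read off the degree bounds from $C\subset C_q$ and $C^*\subset C^*_p$. You are also right that the only real issue is the degenerate curves, since Lemma~\ref{lem:degenerate} presupposes that $F$ is not Cartesian, which is precisely what cannot be assumed here.

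The gap is in your last paragraph: the degenerate branch is not ``routine but slightly fussy'', it cannot be closed at all, because there the two-sided conclusion genuinely fails. Suppose for instance that every $p\in C$ has $C^*_p=\C^2$, which happens exactly when the squarefree polynomial $G$ defining $C$ divides $F$. Extracting a curve from coefficient polynomials, as in the proof of Lemma~\ref{lem:degenerate}, then only reproduces the factorization $F=G\cdot H$, i.e.\ the representation $F=GH+K\cdot 0$ with $K$ arbitrary; nothing in the hypothesis forces any curve of degree at most $d$ in the $(s,t)$-plane to contain $|J|-(d-1)^2$ points of $J$. Concretely, take $F=y-x^2$ (so $d=2$), let $I$ be five points of the parabola and $J$ seven points in general position: then $I\times J\subset Z(F)$ and $|I|,|J|>d^2$, but no curve of degree at most $2$ contains six of the seven points of $J$, so the displayed inequality for $J$ fails for every admissible $K$. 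Symmetrically, when many $C_q$ equal $\C^2$ you obtain $K\mid F$ through most of (the degenerate part of) $J$ but lose all control on the $I$-side. So in these branches only a one-sided statement is available, and ``organizing the sub-cases so that both quantitative bounds hold for a single pair $(G,K)$'' is impossible, not bookkeeping. In fairness, the paper's one-line derivation has the same blind spot, as it tacitly imports the non-Cartesian assumption of Theorem~\ref{thm:null2}; and the one-sided conclusion ($G\mid F$ capturing most of $I$, or $K\mid F$ capturing most of $J$) is all that is actually needed in the corollary's application inside Lemma~\ref{lem:curvecurve}, where divisibility already yields the $(G,K)$-Cartesian form. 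But as a proof of Corollary~\ref{cor:null2} as stated, your argument breaks exactly where you declared it routine, and the correct move there is to weaken the statement, not to push the sub-cases through.
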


Let us discuss a bit further how Theorem \ref{thm:null1} differs from Alon's Nullstellensatz.
As remarked at the start of this section,
most applications of Theorem \ref{thm:alon} use the fact that one can see from a single coefficient of $f$ that $f$ does not have the special form $g(x)h(x,y)+k(y)l(x,y)$.
This requires the more detailed information, given in \cite[Theorem 1.1]{A}, that $\deg(h)\leq \deg(f) - \deg(g)$ and $\deg(l)\leq \deg(f) - \deg(k)$.
This implies that any leading terms of $g(x)h(x,y)+k(y)l(x,y)$ are divisible by $x^{\deg(g)}$ or by $y^{\deg(k)}$.
If $f$ has a leading term $x^{\deg(g)-1}y^{\deg(k)-1}$ (say) with a nonzero coefficient, then $f$ cannot have the special form.

In the proof of Theorem \ref{thm:null1}, we could also obtain information on the degrees of $H$ and $L$.
In fact, in whatever monomial ordering we do the multivariate division \eqref{eq:division},
the multidegree (see \cite[Chapter 2]{CLOS}) of $H$ is at most the multidegree of $F$ minus that of $G$.
If we choose an ordering that respects the total degree of the polynomials, then we get $\deg(H)\leq \deg(F) - \deg(G)$, and similarly $\deg(L)\leq \deg(F) - \deg(K)$.

However, unlike in the one-dimensional case, 
we cannot say much about the degree of $G$ or $K$, which may be as small as one.
Indeed, we could have $I\times J\subset Z(F)$ with $I$ and $J$ each contained in a line, so that $G$ and $K$ are linear polynomials.
Then it seems hard to deduce anything about the coefficients of $G(x,y)H(x,y,s,t)+ K(s,t)L(x,y,s,t)$.


\section{Varieties of dimension one and two}
\label{sec:dimtwo}

Before proving Theorem \ref{thm:dimtwo}, we establish the following intermediate version, which is a Schwartz-Zippel bound for a Cartesian product of one-dimensional sets similar to that in \eqref{eq:schwartzzippel} in Section \ref{sec:intro}, but now the one-dimensional sets are finite subsets of algebraic curves.

\begin{lemma}
\label{lem:curvecurve}
Let $G\in \C[x,y]\backslash\C, K\in \C[s,t]\backslash\C$ be irreducible polynomials of degree at most $\delta$, 
let $F\in \C[x,y,s,t]\backslash \{0\}$ be a polynomial of degree $d$, 
and let $P\subset Z(G),Q\subset Z(K)$ be finite sets.
Then
\[|Z(F)\cap (P\times Q)| = O_{d,\delta}(|P|+|Q|),\]
unless $F$ is $(G,K)$-Cartesian.
\end{lemma}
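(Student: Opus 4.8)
The plan is to parametrize the two curves and reduce the problem to the one-dimensional Schwartz-Zippel bound \eqref{eq:schwartzzippel}. Since $G$ is irreducible, $Z(G)$ is an irreducible curve, and hence it admits a rational parametrization only if it has genus zero; in general I would instead use the fact that an irreducible curve of degree $\delta$ in $\C^2$ can be birationally parametrized by a single parameter via its projection onto a generic coordinate axis — more precisely, there is a dominant map $\C \dashrightarrow Z(G)$, or equivalently the function field $\C(Z(G))$ is a finite extension of $\C(u)$ for a suitable coordinate function $u$ of bounded degree $O_\delta(1)$. Writing things out concretely: after a generic linear change of coordinates we may assume $G$, viewed as a polynomial in $y$ with coefficients in $\C[x]$, has degree $\le \delta$ in $y$, so each point of $Z(G)$ lies over its $x$-coordinate with multiplicity $O_\delta(1)$. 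Do the same for $K$ in the variables $(s,t)$, with $s$ as the base coordinate.

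Next I would pull back $F$. For a point $(x,y,s,t) \in Z(F)\cap(P\times Q)$, the pair $(x,y)$ is one of at most $\delta$ points of $Z(G)$ over $x$, and $(s,t)$ is one of at most $\delta$ points of $Z(K)$ over $s$. So up to a factor $O_\delta(1)$, it suffices to bound the number of pairs $(x,s)$ with $x \in \pi_x(P)$, $s \in \pi_s(Q)$ for which $F$ vanishes at some corresponding lift. The clean way to package this is via resultants: let $\tilde F(x,s) = \mathrm{Res}_y\big(\mathrm{Res}_t(F,K),\, G\big)$ or, more carefully, take the resultant of $F$ with $G$ eliminating $y$ and then with $K$ eliminating $t$, obtaining a polynomial $\tilde F \in \C[x,s]$ of degree $O_{d,\delta}(1)$. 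A point $(x,s)$ arising from an intersection point forces $\tilde F(x,s) = 0$ (the two univariate polynomials $F(x,\cdot,s,\cdot)$-restricted-to-curves share a common root), so $|Z(F)\cap(P\times Q)| = O_\delta\big(|Z(\tilde F)\cap (\pi_x(P)\times \pi_s(Q))|\big)$, which by \eqref{eq:schwartzzippel} is $O_{d,\delta}(|P|+|Q|)$ — \emph{provided} $\tilde F$ is not the zero polynomial.

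The main obstacle — and the place where the Cartesian hypothesis must enter — is exactly the vanishing of this resultant: $\tilde F \equiv 0$ means that for every $x \in \C$ the univariate polynomials obtained by restricting have a common factor for \emph{every} $s$, i.e. (roughly) $F$ restricted to $Z(G)\times Z(K)$ vanishes identically. But $Z(G)\times Z(K) \subset \C^4$ is a surface, and $F$ vanishing on all of it is precisely the statement $Z(G)\times Z(K) \subset Z(F)$, which by Theorem \ref{thm:null1} (using that $G,K$ are irreducible, hence squarefree) is equivalent to $F$ being $(G,K)$-Cartesian. So the dichotomy is: either the resultant argument goes through and gives the linear bound, or $\tilde F \equiv 0$ and $F$ is $(G,K)$-Cartesian. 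I would be careful about two technical points: (i) the generic linear changes of coordinates should be chosen so that neither curve is a "vertical" line in the chosen coordinates and so that the leading coefficients in $y$ (resp. $t$) do not vanish identically on the relevant components, which is possible since there are only finitely many bad directions depending on $\delta$; and (ii) one must check that "a common root of two univariate polynomials" really does force the iterated resultant to vanish at $(x,s)$, handling degenerate cases where a leading coefficient vanishes at a particular $x$ or $s$ — these contribute only $O_{d,\delta}(|P|+|Q|)$ extra points and so are harmless. This last bookkeeping, rather than any deep idea, is where the real work of writing the proof lies.
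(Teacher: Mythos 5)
Your route is genuinely different from the paper's. The paper fixes each $q\in Q$, bounds $|C_q\cap P|$ by B\'ezout when $Z(G)\not\subset C_q$, and handles the exceptional set $Q_1=\{q:Z(G)\subset C_q\}$ by noting that if $|Q_1|>2d^2$ then a product $I\times J\subset Z(F)$ with $|I|,|J|>d^2$ exists, so Corollary \ref{cor:null2} (hence ultimately Lemma \ref{lem:bezoutmany}) forces $F$ to be $(G,K)$-Cartesian; no coordinate changes or elimination theory are needed, and the constants come out explicitly. You instead project both curves to a coordinate axis after a generic linear change, eliminate $y$ and $t$ by an iterated resultant $\tilde F(x,s)=\mathrm{Res}_y(\mathrm{Res}_t(F,K),G)$ of degree $O_{d,\delta}(1)$, and apply the one-dimensional bound \eqref{eq:schwartzzippel} to $\tilde F$ on $\pi_x(P)\times\pi_s(Q)$, using Theorem \ref{thm:null1} only at the very end. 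This is a legitimate and in some ways more elementary alternative: it bypasses Theorem \ref{thm:null2} and Corollary \ref{cor:null2} entirely, at the cost of generic coordinates and resultant bookkeeping.

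There is, however, one step you state only ``roughly,'' and it is exactly the crux: $\tilde F\equiv 0$ does \emph{not} immediately say that $F$ vanishes identically on $Z(G)\times Z(K)$. What the vanishing of the specialized resultant gives you (after arranging that $\mathrm{lc}_y G$ and $\mathrm{lc}_t K$ are nonzero constants) is: for each $p\in Z(G)$ and each $s_0\in\C$ there is \emph{some} $t_0$ with $(s_0,t_0)\in Z(K)$ and $F(p,s_0,t_0)=0$ --- vanishing at one point of each fiber of $Z(K)$ over the $s$-line, not on the whole fiber. To close the gap, argue as follows: $\tilde F\equiv 0$ means $\mathrm{Res}_t(F,K)$ and $G$ share a factor of positive $y$-degree over $\C(x,s)$, and since $G$ is irreducible (and, by Gauss, irreducible in $\C[x,y,s]$) this forces $G\mid \mathrm{Res}_t(F,K)$; then for each fixed $p\in Z(G)$ the set $Z(K)\cap Z(F(p,\cdot,\cdot))$ is Zariski-closed in the irreducible curve $Z(K)$ and surjects onto the $s$-line, hence is infinite, hence is all of $Z(K)$. (The degenerate branch $\mathrm{Res}_t(F,K)\equiv 0$ gives $K\mid F$ directly, so $F$ is Cartesian.) Only after this does $Z(G)\times Z(K)\subset Z(F)$ hold, and Theorem \ref{thm:null1} applies. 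With that argument inserted --- note it uses the irreducibility of $G$ and $K$ in an essential way, just as the paper's proof does --- your proof is correct.
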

\begin{proof}
Recall that $C_q=\{p\in \C^2:F(p,q)=0\}$.
We have
\[|Z(F)\cap (P\times Q)| = \sum_{q \in Q} |C_q\cap P|,\]
so it suffices to bound $|C_q \cap P|$ for each $q\in Q$. 
We set $Q_1 = \{q\in Q: Z(G)\subset C_q\}$ and $Q_2 = Q\backslash Q_1$.
Note that if $C_q = \C^2$, then $q\in Q_1$, and this will not be a problem.
For $q \in Q_2$ we have $|C_q\cap P|\leq |C_q \cap Z(G)|\leq d\cdot \delta$ by B\'{e}zout's inequality (see for instance Lemma \ref{lem:bezoutmany}) and the fact that $G$ is irreducible.
As a result we have 
\[\sum_{q \in Q_2} |C_q \cap P|\le \sum_{q \in Q_2} |C_q\cap Z(G)| \le d\cdot \delta\cdot |Q_2|  = O_{d,\delta}(|Q|).\]
If $|Q_1|\leq 2d^2$, then we can use the trivial bound $|C_q\cap P|\leq |P|$ to get 
\[\sum_{q \in Q_1} |C_q\cap  P| \le 2d^2\cdot |P| = O_d(|P|).\]

Otherwise, we can set $J=Q_1\subset Z(K)$ and let $I$ be any subset of $Z(G)$ with $2d^2$ elements.
Then we have $I\times J\subset Z(F)$ and $|I|,|J|>d^2$, so Corollary \ref{cor:null2} tells us that $F$ is $(G',K')$-Cartesian for some $G',K'$ of degree at most $d$, 
and moreover we have $|I\cap Z(G')|\ge |I|-(d-1)^2 > d^2$ and 
$|J\cap Z(K')|\ge|I|-(d-1)^2 > d^2$.
Thus $|Z(G)\cap Z(G')|>d^2$, 
so by B\'ezout's inequality and the fact that $G$ is irreducible, $G$ divides $G'$, and similarly $K$ divides $K'$.
Hence $F$ is $(G,K)$-Cartesian.
\end{proof}

As an aside, we obtain a bound on the number of repeated values of a polynomial on an algebraic curve.
This is a one-dimensional version of the question that we consider in Section \ref{subsec:repeatedpoly}.
See \cite{VZ} for a discussion of the related question on the number of distinct values on curves.

\begin{corollary}[Repeated values of polynomials on curves]\label{cor:repvaloncurves}
Let $C\subset \C^2$ be an algebraic curve of degree $\delta$ and let $F\in \C[x,y,s,t]\backslash\{0\}$ be a polynomial of degree $d$.
Then for any $a\in \C$ and finite set $P\subset C$,
the number of times $F$ takes the value $a$ on $P\times P$ satisfies
\[\left|\{(x,y,s,t)\in P\times P: F(x,y,s,t) = a\}\right| = O_{\delta,d}(|P|),\]
unless $F-a$ is $(G,G)$-Cartesian (i.e., $G$ divides $F-a$) for a polynomial $G\in \C[u,v]$ such that $C = Z(G)$.
\end{corollary}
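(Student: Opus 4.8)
The plan is to deduce Corollary \ref{cor:repvaloncurves} directly from Lemma \ref{lem:curvecurve} by decomposing the curve $C$ into its irreducible components. Write $C = C_1\cup\cdots\cup C_m$ with each $C_i$ irreducible of degree at most $\delta$, say $C_i = Z(G_i)$ with $G_i\in\C[u,v]$ irreducible; note $m\le\delta$. Set $F' = F - a \in \C[x,y,s,t]\setminus\{0\}$ (if $F-a\equiv 0$ the statement is vacuous, since then $F-a$ is $(G,G)$-Cartesian trivially). Partition $P$ as $P = P_1\cup\cdots\cup P_m$ where $P_i\subset C_i$ (assigning each point of $P$ to one component containing it). Then
\[
\left|\{(p,q)\in P\times P: F(p,q)=a\}\right| \le \sum_{i=1}^m\sum_{j=1}^m |Z(F')\cap(P_i\times P_j)|.
\]
For each ordered pair $(i,j)$, apply Lemma \ref{lem:curvecurve} with $G = G_i$, $K = G_j$ (both irreducible of degree at most $\delta$), $P_i\subset Z(G_i)$, $Q_j = P_j\subset Z(G_j)$: either $|Z(F')\cap(P_i\times P_j)| = O_{d,\delta}(|P_i|+|P_j|) = O_{d,\delta}(|P|)$, or $F'$ is $(G_i,G_j)$-Cartesian. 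Since there are at most $\delta^2$ pairs, summing the "good" terms gives $O_{\delta,d}(|P|)$, which is the desired bound.

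So it remains to show that if $F' = F - a$ is $(G_i,G_j)$-Cartesian for some pair $(i,j)$, then $F-a$ is $(G,G)$-Cartesian with $C = Z(G)$, i.e., we land in the stated exceptional case. This is the step I expect to be the main obstacle, because a priori the two "bad" components could be distinct ($i\ne j$), and Cartesian-ness with respect to two different linear-ish factors is weaker than with respect to a common factor. The key point to exploit is that $F'$ lives in $\C[x,y,s,t]$ with the \emph{same} two-variable blocks $(x,y)$ and $(s,t)$, so a factor from $\C[x,y]$ and a factor from $\C[s,t]$ can be compared as polynomials in a common set of variables after renaming. Concretely: if $F'$ is $(G_i(x,y), G_j(s,t))$-Cartesian, write $F' = G_i(x,y)H + G_j(s,t)L$. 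Let $G = \prod_{k} G_k$ be a defining polynomial for all of $C$ (or, more economically, $G = G_i\cdot G_j$ — whichever makes the bookkeeping cleanest; one must check $Z(G)=C$, which forces taking the product of \emph{all} components, so set $G = \prod_{k=1}^m G_k$). Since $G_i\mid G$ as a polynomial in $\C[u,v]$, we have $G_i(x,y)\mid G(x,y)$ — wait, this goes the wrong way; I actually need $G(x,y) \mid (\text{something})$, not the reverse. The correct move is: $F'$ being $(G_i,G_j)$-Cartesian with $\deg G_i,\deg G_j\ge 1$ means $Z(G_i)\times Z(G_j)\subset Z(F')$; since $Z(G_i), Z(G_j)\subset C = Z(G)$, this does \emph{not} immediately give $Z(G)\times Z(G)\subset Z(F')$.

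Given this tension, the honest plan is: first handle the generic case where $F'$ is $(G_i,G_j)$-Cartesian but \emph{not} such that $F-a$ is $(G,G)$-Cartesian, and argue that in fact Lemma \ref{lem:curvecurve}'s bound still applies to the pair $(i,j)$ after a more careful invocation — namely, re-examine the proof of Lemma \ref{lem:curvecurve}: the conclusion "$F$ is $(G,K)$-Cartesian" there really comes from $G\mid G'$ and $K\mid K'$ for the degree-$\le d$ polynomials $G',K'$ furnished by Corollary \ref{cor:null2}. In the present setting with $G = G_i$, $K = G_j$, if $i=j$ then we directly get $F-a$ is $(G_i,G_i)$-Cartesian, and since $G_i\mid G = \prod_k G_k$ we would want to upgrade to $(G,G)$-Cartesian: indeed if $F' = G_i H + G_i L = G_i(H+L)$ then $F' = G\cdot\big(\frac{1}{\prod_{k\ne i}G_k}(H+L)\big)$ is not a polynomial identity, so instead one writes $F'$ is $(G,G)$-Cartesian via $H' = H\prod_{k\ne i}G_k$? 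No — $G_i H = G H'$ needs $H' = H/\prod_{k\ne i}G_k$. The resolution is that "$(G,G)$-Cartesian" as defined requires $G\in\C[u,v]\setminus\C$ and means $F' = G(x,y)H + G(s,t)L$; if $F' = G_i(x,y)\tilde H$ is divisible (as a polynomial) by $G_i(x,y)$, it need not be divisible by $G(x,y)$. Therefore the cleanest statement is to take $G = G_i$ itself when $i=j$; but then the exception requires $C = Z(G)$, forcing $C$ irreducible — so for reducible $C$ one must argue the bad pair cannot occur unless \emph{every} component coincides appropriately. I would therefore structure the proof to show: the $(i,j)$ term is $O_{d,\delta}(|P|)$ \emph{unless} $i=j=$ (the unique component) and $C$ is irreducible with $C=Z(G_i)$, by noting that if $C$ is reducible we may, for each bad pair, absorb $Z(F')\cap(P_i\times P_j)$ into the trivial bound only when $|P_i|$ or $|P_j|$ is $O_{d}(1)$, and handle the remaining large bad components by a second application of Corollary \ref{cor:null2} to conclude $Z(F')$ contains a large product inside $C\times C$, forcing $F-a$ to be $(G,G)$-Cartesian with $G$ a defining polynomial of $C$ via Theorem \ref{thm:null1}. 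The hard part is precisely this last bootstrapping, and I would devote most of the write-up to it; everything else is routine summation over the $\le\delta^2$ component pairs.
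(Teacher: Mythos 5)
Your first paragraph---decomposing $C$ into irreducible components $Z(G_1),\ldots,Z(G_m)$ with $m\le\delta$, partitioning $P$ accordingly, and applying Lemma \ref{lem:curvecurve} to each of the at most $\delta^2$ pairs $(P_i,P_j)$---is exactly the derivation the paper intends: the corollary is stated immediately after Lemma \ref{lem:curvecurve} with no separate proof, and for irreducible $C$ it is literally the case $G=K$ of that lemma. The genuine gap is the step you flag but never carry out: upgrading ``$F-a$ is $(G_i,G_j)$-Cartesian for some pair of components'' to ``$F-a$ is $(G,G)$-Cartesian for a $G$ with $Z(G)=C$''. No bootstrapping via Corollary \ref{cor:null2} or Theorem \ref{thm:null1} can close this, because the implication is false. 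Take $G_1(u,v)=u$, $G_2(u,v)=v$, so that $C=Z(uv)$ is the union of the two coordinate axes ($\delta=2$), and let $F-a=G_1(x,y)G_2(s,t)=xt$. Putting half of $P$ on each axis gives $F(p,q)=a$ for all $\Omega(|P|^2)$ pairs with $p$ on the axis $x=0$ and $q$ on the axis $y=0$; yet $F-a$ is not $(G,G)$-Cartesian for any $G$ with $Z(G)=C$, since such a $G$ is a constant times $u^{e_1}v^{e_2}$ with $e_1,e_2\ge1$, and evaluating a purported identity $xt=G(x,y)H+G(s,t)L$ at $(x,y,s,t)=(1,0,0,1)$ gives $1=0$. (Relaxing the exception to ``$Z(G)\subseteq C$'' does not help either: consider $F-a=x+s-1$ on $C=Z(u(u-1))$.)

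So the statement as printed is correct only for irreducible $C$; for reducible $C$ the exception must be the componentwise one that your argument actually produces, namely that $F-a$ is $(G_i,G_j)$-Cartesian for some (possibly equal) irreducible components $Z(G_i),Z(G_j)$ of $C$. With the exception read that way, your first paragraph is already a complete proof: if no pair $(i,j)$ is exceptional, summing the bounds $O_{d,\delta}(|P_i|+|P_j|)$ over the at most $\delta^2$ pairs gives $O_{\delta,d}(|P|)$. The entire second half of your write-up should therefore be replaced by this observation (or by the assumption that $C$ is irreducible), rather than by a search for an upgrade that does not exist.
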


Before we get back to varieties in $\C^4$ of dimension at most two, we need the following technical lemma from algebraic geometry.
A qualitative version of the lemma can be found for instance in \cite[Theorem I.6.3.7]{Shaf} (where it is called ``The theorem on the dimension of the fibers'').
However, we need a quantitative version where the dependence on the degree of the variety is specified. 
Such a statement is proved in \cite[Lemma 3.7]{BGT}, 
although the argument there is very general and the dependence on $d$ is not given explicitly.
In this particular case, it would not be hard to show by a direct argument that $W$ has degree $O(d^2)$.

\begin{lemma}\label{lem:fibers}
Let $\pi:\C^4\to \C^2$ be the projection defined by $\pi(x,y,s,t) = (s,t)$.
Let $X\subset \C^4$ be a variety of degree $d$ and dimension at most two.
Then there is a curve $W\subset \C^2$ of degree $O_d(1)$ such that, for every $(s,t)\not\in W$, $\pi^{-1}(s,t)\cap X$ is a finite set of size at most $d$.
\end{lemma}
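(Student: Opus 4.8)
The plan is to decompose $X$ into irreducible components and handle each separately, since a curve $W$ of bounded degree for each of the $O_d(1)$ components can be combined (by taking the union, i.e., the product of defining polynomials) into a single curve of degree $O_d(1)$ that works for all of $X$. So assume $X$ is irreducible of dimension $\le 2$. The projection $\pi$ restricted to $X$ has image $\overline{\pi(X)}$, which is an irreducible variety in $\C^2$ of dimension at most two; the generic fiber of $\pi|_X$ over its image has dimension $\dim X - \dim \overline{\pi(X)}$. First I would dispose of the case where this generic fiber is positive-dimensional: this can only happen when $\dim X = 2$ and $\dim \overline{\pi(X)} \le 1$, in which case $\overline{\pi(X)}$ is itself a curve $W_0$ of degree $O_d(1)$, and for $(s,t)\notin W_0$ the fiber $\pi^{-1}(s,t)\cap X$ is empty. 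So from now on the generic fiber is finite.

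When the generic fiber is finite, the heart of the matter is to produce the curve $W$ off of which \emph{every} fiber is finite of size at most $d$. First I would bound the generic fiber size: a generic fiber of $\pi|_X$ is a finite set whose cardinality is at most the degree of $X$ (intersecting $X$ with the two hyperplanes $s = s_0$, $t = t_0$ and applying B\'ezout's inequality), so the bound ``$\le d$'' is automatic once the fiber is finite. The exceptional set is $E = \{(s,t) : \dim(\pi^{-1}(s,t)\cap X) \ge 1\}$. The qualitative theorem on dimension of fibers (\cite[Theorem I.6.3.7]{Shaf}) tells us $E$ is a proper closed subset of $\overline{\pi(X)}$, hence contained in a curve; the quantitative input of \cite[Lemma 3.7]{BGT} upgrades this to: $E$ lies in a subvariety of degree $O_d(1)$, hence in a curve $W$ of degree $O_d(1)$ (take a defining polynomial of minimal degree, or the union of one-dimensional components). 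Taking $W$ to be this curve (together with $W_0$ from the positive-dimensional image case, and similar curves from each component), we get that for $(s,t)\notin W$ the fiber is finite of size $\le d$, as desired.

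The main obstacle is purely bookkeeping of the quantitative dependence on $d$: \cite[Lemma 3.7]{BGT} is stated in great generality and does not track the constant explicitly, so one should either cite it as a black box (which is what the excerpt's surrounding text already does) or give the direct argument hinted at in the paper — namely, that the exceptional locus $E$ can be cut out by the vanishing of appropriately chosen resultants or by a dimension count on an incidence variety, yielding $\deg W = O(d^2)$. I would present the proof via the black-box citation for brevity and remark that an explicit $O(d^2)$ bound follows from a resultant computation, deferring the details. A minor additional point to be careful about: ``degree of a variety'' for reducible or non-equidimensional $X$ should be interpreted as the sum of degrees of components, so the decomposition step at the start is compatible with the degree hypothesis, and the union of the $O_d(1)$ curves obtained from the components still has degree $O_d(1)$.
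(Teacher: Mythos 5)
Your proposal matches the paper's treatment: the paper gives no proof of this lemma, instead citing Shafarevich's theorem on the dimension of fibers for the qualitative statement and \cite[Lemma 3.7]{BGT} for the quantitative degree dependence, exactly as you do, with the remark that a direct argument would give $\deg W = O(d^2)$. Your added reductions (splitting into irreducible components, handling a lower-dimensional image, and bounding finite fibers by $d$ via B\'ezout) are correct routine supplements; just note that a positive-dimensional generic fiber can also occur when $\dim X=1$ and $\overline{\pi(X)}$ is a point, in which case $\overline{\pi(X)}$ is contained in, rather than equal to, a curve.
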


We are now ready to bound $|X\cap (P\times Q)|$ when $X$ has dimension at most two.

\begin{proof}[Proof of Theorem \ref{thm:dimtwo}.]
Let $\pi_1,\pi_2$ be the projections defined by $\pi_1(x,y,s,t)=(x,y)$ and $\pi_2(x,y,s,t)=(s,t)$. 
By  Lemma \ref{lem:fibers}, there is a curve $W\subset \mathbb{C}^2$ of degree $O_d(1)$ such that for any $q\notin W$ we have $|\pi_2^{-1}(q)\cap X| \leq d$.
Similarly, there is a curve $V\subset \mathbb{C}^2$ of degree $O_d(1)$ such that for any $p\notin V$ we have $|\pi_1^{-1}(p)\cap X| \leq d$.

Let $P_1 = P\backslash V$ and $P_2 = P\cap V$, and similarly $Q_1 = Q\backslash W$ and $Q_2 = Q\cap W$.
Then
\begin{align}\label{eq:QminusW}
|X\cap (P_1\times Q)|\leq\sum_{p\in P_1} |\pi_1^{-1}(p)\cap X| \leq d\cdot |P_1| = O_d( |P|),
\end{align}
and similarly $|X\cap (P\times Q_1)|=O_d( |Q|)$.

It remains to bound $|X\cap (P_2\times Q_2)|$.
Let $Z(G_1),\ldots, Z(G_v)$ be the irreducible components of $V$, and let $Z(K_1),\ldots, Z(K_w)$ be the irreducible components of $W$.
Note that $v,w = O_d(1)$.
We have
\begin{align}\label{eq:sumofpairs}
|X\cap (P_2\times Q_2)| \leq \sum_{i,j}|X\cap ((P_2\cap Z(G_i))\times (Q_2\cap Z(K_j))| .
\end{align}

Consider any pair $G_i, K_j$.
By Lemma \ref{lem:curvecurve}, for each $F\in I(X)$ we have
\begin{align}\label{eq:VWcomp}
 |Z(F)\cap ((P_2\cap Z(G_i))\times (Q_2\cap Z(K_j))| = O_d(|P|+|Q|),
\end{align}
unless $F$ is $(G_i,K_j)$-Cartesian.
In other words, if some $F\in I(X)$ is not $(G_i,K_j)$-Cartesian, then we have this bound, or else all $F\in I(X)$ are $(G_i,K_j)$-Cartesian, which implies that $X$ is Cartesian.
Thus, if $X$ is not Cartesian, then for every pair $i,j$ we have the bound \eqref{eq:VWcomp}.
Summing the bound in \eqref{eq:VWcomp} over the $O_d(1)$ pairs $i,j$ as in \eqref{eq:sumofpairs}, and combining with \eqref{eq:QminusW}, we obtain the stated bound.
\end{proof}

We note that in Theorem \ref{thm:dimtwo}, the dependence of the bound on $d$ could be determined from our proof, if one makes the degree $O_d(1)$ of $W$ in Lemma \ref{lem:fibers} explicit.
For Theorem \ref{thm:dimthree} this would be more difficult, because its proof relies on Theorem \ref{thm:shefferzahl} below, in which the degree dependence is not made explicit.


\section{Varieties of dimension three}
\label{sec:dimthree}

We now consider three-dimensional varieties $X\subset \C^4$.
We reduce our problem of bounding $|X\cap (P\times Q)|$ to the problem of bounding incidences between points and curves, and then apply a well-known incidence bound.
To show that the conditions of this bound are met, we again use the curve duality introduced in Section \ref{sec:null}.
This duality was similarly used in combination with incidence bounds in a number of recent papers, including \cite{PZ, RSZ, SSS}.
The challenge in our situation is to convert the combinatorial condition of the incidence bound to an algebraic condition on the polynomials defining $X$.

Our main tool is the following incidence bound.
The statement was proved by Pach and Sharir \cite{PS} in $\R^2$, and this was generalized to $\C^2$ by Sheffer, Szab\'o, and Zahl \cite{SSZ}, at the cost of an $\eps$.
The \emph{incidence graph} defined by a set $\pts$ of points and a set $\cvs$ of curves is the bipartite graph with vertex sets $\pts$ and $\cvs$, with an edge between $p\in \pts$ and $C\in \cvs$ if $p\in C$.
We denote the number of edges in this graph by $I(\pts,\cvs)$; in other words, $I(\pts,\cvs)$ is the number of \emph{incidences} between $\pts$ and $\cvs$.
We denote by $K_{s,t}$ the complete bipartite graph on $s$ and $t$ vertices.

\begin{theorem}
\label{thm:shefferzahl}
Let $\pts\subset \C^2$ and let $\mathcal{C}$ be a set of algebraic curves of degree at most $d$.
If the incidence graph of $ \pts$ and $ \mathcal{C}$ contains no $K_{2,M}$ or $K_{M,2}$,
then
\[I(\pts,\mathcal{C}) = O_{d,M,\eps}\left(|\pts|^{2/3+\eps}|\mathcal{C}|^{2/3}+|\pts|+|\mathcal{C}|\right).\]
If $\pts\subset \R^2$, then the $\eps$ can be omitted.
\end{theorem}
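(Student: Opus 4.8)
The plan is to prove Theorem~\ref{thm:shefferzahl} by polynomial partitioning, the standard modern route to incidence bounds of Pach--Sharir type; the real case is technically cleaner, so I would establish it first and then describe the modifications, and the source of the $\eps$, needed over $\C$. Two elementary double-counting bounds come for free: absence of $K_{2,M}$ means any two points of $\pts$ lie on at most $M-1$ common curves, so $\sum_{p\in\pts}\binom{d(p)}{2}\le(M-1)\binom{|\cvs|}{2}$ with $d(p)$ the number of curves through $p$, and convexity gives $I(\pts,\cvs)=O_M(|\pts|^{1/2}|\cvs|+|\pts|)$; symmetrically, absence of $K_{M,2}$ gives $I(\pts,\cvs)=O_M(|\pts|\,|\cvs|^{1/2}+|\cvs|)$. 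The first is $O_M(|\pts|)$ when $|\cvs|\le|\pts|^{1/2}$ and the second is $O_M(|\cvs|)$ when $|\cvs|\ge|\pts|^2$, so the theorem already holds in both extreme regimes; from now on I assume $|\pts|^{1/2}\le|\cvs|\le|\pts|^2$. After the standard reduction of splitting each curve into its at most $d$ irreducible components, which preserves $K_{2,\cdot}$- and $K_{\cdot,2}$-freeness up to constants depending on $d$, I may also assume every curve in $\cvs$ is irreducible.

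Next, fix a parameter $r$ and use the polynomial partitioning theorem of Guth and Katz in $\R^2$ to obtain $g$ with $\deg g=O(r)$ whose complement has $O(r^2)$ open cells, each meeting $\pts$ in $O(|\pts|/r^2)$ points. Writing $\pts_0=\pts\cap Z(g)$ and $\pts_\star=\pts\setminus Z(g)$, the incidences split as $I(\pts_\star,\cvs)+I(\pts_0,\cvs)$. For the cell part, an irreducible curve not contained in $Z(g)$ crosses $Z(g)$ in $O_d(r)$ points and hence enters $O_d(r)$ cells, so $\sum_\Delta|\cvs_\Delta|=O_d(r|\cvs|)$, where $\cvs_\Delta$ is the set of curves meeting the cell $\Delta$; inside one cell, a curve through $k\ge 2$ of the points contributes $k\le 2\binom{k}{2}$ incidences and $\sum\binom{k}{2}\le(M-1)\binom{|\pts\cap\Delta|}{2}$, giving $I(\pts\cap\Delta,\cvs_\Delta)=O_M(|\pts\cap\Delta|^2+|\cvs_\Delta|)$. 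Summing over cells and using $\sum_\Delta|\pts\cap\Delta|^2=O(|\pts|^2/r^2)$ yields $I(\pts_\star,\cvs)=O_{d,M}(|\pts|^2/r^2+r|\cvs|)$, and the choice $r\sim(|\pts|^2/|\cvs|)^{1/3}$, which lies between $1$ and $|\pts|^{1/2}$ in our regime, balances the two terms to $O_{d,M}(|\pts|^{2/3}|\cvs|^{2/3})$.

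For the incidences on the partitioning variety, split $\cvs$ according to whether a curve is contained in $Z(g)$ or not. A curve not contained in $Z(g)$ meets the finite set $\pts_0\subset Z(g)$ in $O_d(r)$ points by B\'ezout, so these contribute $O_d(r|\cvs|)=O_d(|\pts|^{2/3}|\cvs|^{2/3})$. An irreducible curve contained in $Z(g)$ is one of its $O(r)$ irreducible components, so there are only $O(r)$ of them, and the trivial $K_{2,M}$-bound gives that they contribute $O_{d,M}(|\pts_0|^{1/2}r+|\pts_0|)=O_{d,M}(|\pts|^{2/3}|\cvs|^{2/3}+|\pts|)$ in our regime. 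Adding the three contributions proves the theorem over $\R$.

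Over $\C$ one runs the same scheme in $\R^4$, identifying $\C^2$ with $\R^4$ and following Sheffer and Zahl~\cite{SZ}, and here lies what I expect to be the main obstacle. A complex algebraic curve is a real surface of real dimension two, and the estimate ``a curve not in $Z(g)$ enters $O(r)$ cells'' must be replaced by a bound on the number of cells of an $\R^4$-partition that such a surface can meet; via the theory of connected components of real varieties this costs a factor $r^{O(1)}$ rather than $O(r)$, which unbalances the optimization. One recovers the bound by the standard bootstrap of iterating the partitioning $O(1/\eps)$ times, trading each polynomial loss for an arbitrarily small power $|\pts|^{\eps}$. This is precisely where the $\eps$ enters; it is absent over $\R$ because there the cell count for a curve is linear in $r$, and removing it in the complex case appears to require genuinely new partitioning technology.
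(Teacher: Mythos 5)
First, note that the paper does not prove Theorem \ref{thm:shefferzahl} at all: it is quoted as a known tool, with \cite{PS} supplying the real case and Sheffer--Szab\'o--Zahl \cite{SZ} the complex case. So there is no in-paper proof to match, and your proposal should be judged as a self-contained argument. Over $\R$ your partitioning proof is essentially the standard one and is sound: the cell decomposition, the crossing bound $O_d(r)$ for an irreducible curve, the in-cell K\H{o}v\'ari--S\'os--Tur\'an step, the choice $r\sim(|\pts|^2/|\cvs|)^{1/3}$, and the two-way split of the curves relative to $Z(g)$ all work in the regime $|\pts|^{1/2}\le|\cvs|\le|\pts|^2$. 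One slip to fix: you attach the wrong hypothesis to each double-counting bound. Absence of $K_{2,M}$ (two points on at most $M-1$ common curves) bounds $\sum_{C}\binom{|C\cap\pts|}{2}$ by $(M-1)\binom{|\pts|}{2}$ and yields $I=O_M(|\pts|\,|\cvs|^{1/2}+|\cvs|)$, whereas the inequality you state for $\sum_p\binom{d(p)}{2}$ follows from absence of $K_{M,2}$ (or, for the components inside $Z(g)$, simply from B\'ezout for distinct irreducible curves). Since both hypotheses are assumed, every bound you use is available and the argument survives, but the attributions should be corrected.

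The genuine gap is the complex case, which is the half the paper actually needs for Theorem \ref{thm:dimthree} over $\C$. As written, that part of your proposal is a deferral to \cite{SZ} plus a heuristic, and the heuristic misidentifies the obstruction. In $\R^4$ a degree-$r$ partition has $O(r^4)$ cells with $O(|\pts|/r^4)$ points each, and a complex curve (a real two-dimensional surface of bounded degree) crosses only $O_d(r^2)$ cells by the Barone--Basu bound; the cellular count $O(|\pts|^2/r^4+r^2|\cvs|)$ then balances to $|\pts|^{2/3}|\cvs|^{2/3}$ with no loss, so the ``unbalanced optimization'' you describe is not where the difficulty lies. The real problems in \cite{SZ} are the boundary cases: all the points, and entire complex curves, may lie on the three-dimensional real partitioning hypersurface, which forces a second-level partition on a variety, a careful treatment of complex curves contained in a real hypersurface (using the $K_{2,M}$/$K_{M,2}$ hypotheses there as well), and an induction on $|\pts|$ with constant-degree partitioning in the style of Solymosi and Tao; it is this induction that produces the $\eps$, and removing it is an open problem (cf.\ the discussion of Zahl's theorem in Section \ref{sec:discussion}). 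A one-sentence claim that iterating the partition $O(1/\eps)$ times ``recovers the bound'' does not engage with any of this, so the complex statement remains unproved in your write-up except by citation --- which is legitimate, but then your proposal is, for that case, exactly what the paper already does.
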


In Theorem \ref{thm:dimthree} we are given a polynomial $F\in \C[x,y,s,t]$ and finite sets $P,Q\subset\C^2$.
Recall from Section \ref{sec:null} that for each $q=(s_q,t_q)\in Q$ we define a curve
\[C_q 
= \{(x,y)\in\C^2:F(x,y,s_q,t_q)=0\},\]
and for each $p=(x_p,y_p)\in P$ we define a dual curve
\[C_p^* 
= \{(s,t)\in\C^2:F(x_p,y_p,s,t)=0\}.\]
Let $\cvs_Q=\{C_q: q\in Q\}$ be the \emph{multiset} of curves defined by $Q$.

Note that we have to be careful when we pass from the point set $Q$ to the curves $C_q$,
because two different points $q,q'\in Q$ may define the same curve $C_q = C_{q'}$ (viewed as sets).
For this reason we deal with $\cvs_Q$ as a multiset, and we extend the notion of incidence graph to multisets in the obvious way (distinct $q,q'$ give distinct vertices, even if $C_q = C_{q'}$).
Although Theorem \ref{thm:shefferzahl} is not stated for multisets of curves, our Corollary \ref{cor:noKMM} below is.

We now show that for points and curves defined in this way, we can use the duality to obtain a stronger version of Theorem \ref{thm:shefferzahl} for our situation. 
Specifically, the condition on the excluded complete bipartite graph is much weaker.
A similar argument was used in \cite[Lemma 3.4]{PZ}.
A comparable statement was proved in a very different way by Fox et al. \cite{FPSSZ}; see also the discussion in Section \ref{sec:discussion}.

\begin{corollary}\label{cor:noKMM}
Let $P,Q\subset \C^2$ and let $F\in \C[x,y,s,t]$ have degree $d$.
Let $\mathcal{C}_Q$ be the multiset of curves defined by $Q$ and $F$.
If the incidence graph of $P$ and $ \mathcal{C}_Q$ contains no $K_{M,M}$, then
\[I(P,\mathcal{C}_Q) = O_{d,M,\eps}\left(|P|^{2/3+\eps}|Q|^{2/3}+|P|+|Q|\right).\]
If $P,Q\subset \R^2$, then the $\eps$ can be omitted.
\end{corollary}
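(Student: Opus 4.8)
The plan is to reduce Corollary~\ref{cor:noKMM} to Theorem~\ref{thm:shefferzahl} by showing that the hypothesis ``no $K_{M,M}$ in the incidence graph of $P$ and $\cvs_Q$'' forces the stronger hypothesis of Theorem~\ref{thm:shefferzahl}, namely the absence of $K_{2,M'}$ and $K_{M',2}$ for a suitable $M'=M'(d,M)$, after first discarding a controlled number of bad points and curves. First I would dispose of the degenerate curves: by Lemma~\ref{lem:degenerate}, if $F$ is not Cartesian there are at most $d^2$ points $q$ with $C_q=\C^2$, and each such curve contributes at most $|P|$ incidences, so the $O(d^2|P|)$ incidences from these $q$ are absorbed into the $O(|P|)$ term; if $F$ \emph{is} Cartesian we are not in the situation of the corollary (or one can note the statement is vacuous/trivial there), so we may assume every $C_q$ with $q\in Q$ is an honest curve of degree $\le d$. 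After this reduction, $\cvs_Q$ is a multiset of curves of degree at most $d$ in $\C^2$, and $I(P,\cvs_Q)$ has changed by at most $O(d^2|P|)$.

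Next I would produce the $K_{2,M'}$-freeness. A $K_{2,M'}$ in the incidence graph means there are two points $p_1,p_2\in P$ and $M'$ curves $C_{q_1},\dots,C_{q_{M'}}$ (from distinct $q_i\in Q$) all passing through both $p_1$ and $p_2$. By duality, $p_1,p_2\in C_{q_i}$ is the same as $q_i\in C^*_{p_1}\cap C^*_{p_2}$. So all $M'$ points $q_1,\dots,q_{M'}$ lie in $C^*_{p_1}\cap C^*_{p_2}$. If the two dual curves $C^*_{p_1}$ and $C^*_{p_2}$ share no common component, B\'ezout gives $|C^*_{p_1}\cap C^*_{p_2}|\le d^2$, so taking $M'>d^2$ rules this case out. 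The remaining danger is that $C^*_{p_1}$ and $C^*_{p_2}$ share a common component, i.e. there is a curve $D$ with $D\subset C^*_{p}$ for two (indeed potentially many) points $p$; this is precisely the configuration that, via Lemma~\ref{lem:bezoutmany} and the duality argument in the proof of Theorem~\ref{thm:null2}, produces a product $C\times C^*\subset Z(F)$ and hence forces $F$ to be Cartesian. So I would argue: if more than $d^2$ (or some explicit bound) points $p$ have dual curves sharing a fixed component $D$, then $D$ sits inside $C^*_p$ for all those $p$, and running the remainder of the proof of Theorem~\ref{thm:null2}/Corollary~\ref{cor:null2} (applying Lemma~\ref{lem:bezoutmany} in the other variable) yields a large finite product in $Z(F)$, contradicting non-Cartesianness. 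Hence only $O_d(1)$ points of $P$ are ``bad'' in this component-sharing sense; removing them costs only $O_d(|Q|)$ incidences (each removed point lies on at most $|Q|$ of the curves, counting multiplicity), and for the surviving points no $K_{2,M'}$ can occur with $M'$ large enough. The symmetric argument, swapping the roles of $P$ and $Q$ (i.e. looking at $K_{M',2}$: one curve $C_q$ through $M'$ points of $P$ that also all lie on a second curve $C_{q'}$, which forces $M'$ points of $P$ in $C_q\cap C_{q'}$), removes $O_d(1)$ curves and handles $K_{M',2}$-freeness — here one must be slightly careful because $\cvs_Q$ is a multiset, but distinct $q,q'$ with $C_q=C_{q'}$ literally give a common component, so the same dichotomy applies.

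With all reductions in place, the surviving sub-configuration $(P',\cvs_{Q'})$ has $P'\subset\C^2$, $\cvs_{Q'}$ a multiset of curves of degree $\le d$, no $K_{2,M'}$ and no $K_{M',2}$, so Theorem~\ref{thm:shefferzahl} applies and gives $I(P',\cvs_{Q'})=O_{d,M',\eps}(|P'|^{2/3+\eps}|Q'|^{2/3}+|P'|+|Q'|)$; since Theorem~\ref{thm:shefferzahl} with a multiset is no stronger than for the underlying set plus the extra incidences from repeated curves (repeated curves sharing a component is exactly the excluded configuration, so after our reductions each curve-class has bounded multiplicity, or one passes to the set version losing only a constant factor), and since $|P'|\le|P|$, $|Q'|\le|Q|$, and the removed incidences total $O_d(|P|+|Q|)$, we recover the claimed bound $O_{d,M,\eps}(|P|^{2/3+\eps}|Q|^{2/3}+|P|+|Q|)$, with the $\eps$ omitted when $P,Q\subset\R^2$ by the real case of Theorem~\ref{thm:shefferzahl}. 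The main obstacle I anticipate is the bookkeeping around the component-sharing case: converting ``many points have pairwise-common-component dual curves'' into an actual large Cartesian product requires re-deriving the core of the proof of Theorem~\ref{thm:null2} (one application of Lemma~\ref{lem:bezoutmany} in each pair of variables, tracking the quantitative thresholds $>d^2$), and doing it so that the number of discarded points and curves is genuinely $O_d(1)$ and the discarded incidences are genuinely $O_d(|P|+|Q|)$ rather than something larger; the multiset subtlety in $\cvs_Q$ must be threaded through this argument consistently.
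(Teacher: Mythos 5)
There is a genuine gap at the heart of your reduction: the claim that only $O_d(1)$ points of $P$ are ``bad'' in the component-sharing sense, so that deleting them (and $O_d(1)$ curves) yields $K_{2,M'}$- and $K_{M',2}$-freeness. What your Bézout/duality argument actually shows is that each \emph{fixed} component $D$ can satisfy $D\subset C_p^*$ for at most $d^2$ points $p$ (else Theorem~\ref{thm:null2} makes $F$ Cartesian). It does not follow that few points are involved in sharing, because the shared components can vary from pair to pair. Concretely, take $F=(s-x^2)^2+(t-y)^2-1$, which is not Cartesian (no product of curves lies in $Z(F)$, since distinct dual circles meet in at most two points), and take $P$ symmetric about the $y$-axis: every pair $\{(a,b),(-a,b)\}$ has \emph{identical} dual curves, namely the circle centered at $(a^2,b)$, and $Q$ can be chosen with arbitrarily many points on each such circle. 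The incidence graph contains no $K_{3,3}$, so the corollary's hypothesis holds, yet every point of $P$ lies in a bad pair and each bad pair produces a $K_{2,M'}$ for any $M'$; no deletion of $O_d(1)$ points or curves can restore the hypotheses of Theorem~\ref{thm:shefferzahl}. A second, related problem is that your argument (including the use of Lemma~\ref{lem:degenerate} for the curves $C_q=\C^2$, and the phrase ``if $F$ is Cartesian we are not in the situation of the corollary'') imports a non-Cartesian hypothesis that Corollary~\ref{cor:noKMM} does not have; the corollary must be proved, and in the paper is proved, using only the $K_{M,M}$-freeness of the incidence graph.

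The paper's proof replaces ``delete $O_d(1)$ bad points'' by a \emph{partition} argument, and the hypothesis it exploits to control bad pairs is precisely the absence of $K_{M,M}$, not non-Cartesianness. One forms the graph $G$ on $P$ joining $p,p'$ if at least $2dM$ curves of $\cvs_Q$ pass through both; by duality and Lemma~\ref{lem:bezoutmany} such an edge yields a common irreducible component of $C_p^*,C_{p'}^*$ containing at least $M$ points of $Q$, and a vertex of degree greater than $dM$ would force one of the at most $d$ components of $C_p^*$ to be shared with $M$ neighbours, producing a forbidden $K_{M,M}$. Hence $G$ has maximum degree at most $dM$ and can be properly coloured with $dM+1$ colours, partitioning $P$ into $O_{d,M}(1)$ classes $P_i$ with no $K_{2,2dM}$ against $\cvs_Q$; the dual procedure (after setting aside the curves with fewer than $2dM$ incidences, which contribute only $O_{d,M}(|Q|)$) partitions $Q$ and simultaneously disposes of the multiset issue. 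In your example the two points of each bad pair simply land in different classes $P_i$, which is exactly what deletion cannot achieve. Your opening moves (duality, Bézout, the $d^2$ bound for a fixed shared component) are correct and are indeed ingredients of this argument, but without the bounded-degree-plus-colouring step the reduction to Theorem~\ref{thm:shefferzahl} does not go through.
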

\begin{proof}
First note that we can assume that $M>d^2$, since if $M\leq d^2$ and the incidence graph contains no $K_{M,M}$, then it also contains no $K_{d^2+1,d^2+1}$, so we can replace $M$ by $d^2+1$.

We claim that we can partition $P$ into $O_{d,M}(1)$ subsets $P_i$, and $Q$ into $O_{d,M}(1)$ subsets $Q_j$, so that the following holds.
Each $\cvs_{Q_j}$ is a \emph{set}, 
and for each pair $i,j$, the incidence graph of $P_i$ and $\cvs_{Q_j}$ contains no $K_{2,2dM}$ or $K_{2dM,2}$.
Given this claim, we can apply Theorem \ref{thm:shefferzahl} to the points $P_i$ and the curves $\cvs_{Q_j}$ to get
\[I(P_i,\cvs_{Q_j}) = O_{d,M,\eps}\left(|P_i|^{2/3+\eps}|Q_j|^{2/3}+|P_i|+|Q_j|\right). \]
Summing up these bounds for the $O_{d,M}(1)$ pairs $i,j$ proves the corollary.

The rest of the proof concerns the claim that such a partition exists.
Construct a graph $G$ with vertex set $P$ and an edge between two points $p,p'\in P$ if there are at least $2dM$ points $q\in Q$ such that $p,p'\in C_q$.
By duality, 
if $G$ has an edge between $p$ and $p'$,
then the intersection of the curves $C_p^*$ and $C_{p'}^*$ contains a set $I$ of at least $2dM$ points of $Q$.
Since $2dM>d^2$, Lemma \ref{lem:bezoutmany} implies that $C_p^*$ and $C_{p'}^*$ contain a common curve $C_0$, and moreover $|C_0\cap I|\geq |I| - (d-1)^2 > dM$.
This curve has degree at most $d$ and thus at most $d$ irreducible components, one of which contains at least $M$ points of $Q$.
To summarize, an edge between $p$ and $p'$ implies the existence of a common irreducible component of $C_p^*$ and $C_{p'}^*$ that contains at least $M$ points of $Q$.

We can use this observation to bound the maximum degree of a vertex of $G$ by $dM$.
Suppose that a vertex $p\in P$ has degree greater than $dM$.
Since $C_p^*$ has at most $d$ irreducible components, there must be an irreducible component $C_1$ of $C_p^*$ that contains at least $M$ points of $Q$, and that is shared with the curve $C_{p'}^*$ for at least $M$ neighbors $p'$ of $p$.
Then there  would be a $K_{M,M}$ in the incidence graph of $P$ and $\cvs_Q$, with on one side these $M$ neighbors $p'$, and on the other side any $M$ points of $Q$ on $C_1$.
This contradicts the assumption, so the degree of a vertex of $G$ is at most $dM$.

By basic graph theory\footnote{See for instance \cite[Chapter 14]{BM}; the simple proof is based on a greedy algorithm.},
 we can color $G$ with $dM+1$ colors, or in other words, we can partition $P$ into $dM+1 = O_{d,M}(1)$ subsets $P_i$ such that no two vertices in the same subset are adjacent in $G$.
 Since an edge in $G$ corresponds to a $K_{2,2dM}$ in the incidence graph, 
 this means that the incidence graph of $P_i$ and $\cvs_Q$ contains no $K_{2,2dM}$.

Let $Q_0$ be the subset of $q\in Q$ for which $C_q$ contains fewer than $2dM$ points of $P$.
The curves in $\cvs_{Q_0}$ together give $O_{d,M}(|Q|)$ incidences with $P$.
Using the dual of the procedure above, we can partition $Q\backslash Q_0$ into $O_{d,M}(1)$ subsets $Q_j$ such that the incidence graph of $P$ with each multiset $\cvs_{Q_j}$ contains no $K_{2dM,2}$.
It follows that each $\cvs_{Q_j}$ is a \emph{set}, 
because if $C_q=C_{q'}$ for two $q,q'\in Q_j$, then the fact that $q,q'\not\in Q_0$ implies that $C_q = C_{q'}$ contains at least $2dM$ points of $P$,
which would give a $K_{2dM,2}$ in the incidence graph of $P$ and $C_{Q_j}$.
Hence, each $Q_j$ is a set, and each of the $O_{d,M}(1)$ pairs $P_i,\cvs_{Q_j}$ defines an incidence graph without $K_{2,2dM}$ or $K_{2dM,2}$.
This proves the existence of the claimed partition.
\end{proof}

We now have all the pieces in place to prove Theorem \ref{thm:dimthree}.

\begin{proof}[Proof of Theorem \ref{thm:dimthree}.]
If $X$ has components of dimension less than three, we apply Theorem \ref{thm:dimtwo} to them, which gives a bound better than that in Theorem \ref{thm:dimthree}, unless one of these components is Cartesian.
As noted in Definition \ref{def:cartesian}, 
if any component of $X$ is Cartesian, then so is $X$.

Therefore, we can assume that every component of $X$ is three-dimensional, 
so that we can write $X=Z(F)$ (see for instance \cite[Theorem 1.21]{Shaf} for the fact that any variety of codimension one is the zero set of a single polynomial). 
If $F$ is chosen to have the minimum degree with this property, then it follows that $\deg(F)\leq d$.
If $F$ is not Cartesian, then by Theorem \ref{thm:null2}, 
there are no $I,J\subset\C^2$ with $|I|,|J|>d^2$ such that $I\times J\subset Z(F)$.
Setting $M= d^2+1$, it follows that the incidence graph of $P$ and $\cvs_Q$ contains no $K_{M,M}$.
Corollary \ref{cor:noKMM} then gives the bound in Theorem \ref{thm:dimthree}.
\end{proof}


\section{Repeated and distinct values of polynomial maps}
\label{sec:repeateddistinct}

We now give our applications of Theorems \ref{thm:dimtwo} and \ref{thm:dimthree} to value sets of polynomial maps.
We first consider polynomial maps $\C^4\to \C$ defined by one polynomial, and then polynomial maps $\C^4\to\C^2$ defined by two polynomials.
We could also consider maps defined by more polynomials, but this seems less interesting.

\subsection{Repeated values of polynomials}
\label{subsec:repeatedpoly}
Erd\H os \cite{Er} asked for the maximum number of unit distances that can occur between the points in a finite set $P\subset \R^2$. 
The best known upper bound is $O(|P|^{4/3})$, due to Spencer, Szemer\'edi, and Trotter \cite{SST}.
We can rephrase this statement as follows.
Let
\[F(x,y,s,t) = (x-s)^2 + (y-t)^2,\]
so that $F(x,y,s,t)$ equals the squared Euclidean distance between $(x,y)$ and $(s,t)$.
Then the number of pairs in  $P\times P\subset \R^2\times \R^2$ on which $F$ can take the value $1$ is $O(|P|^{4/3})$.
Of course, the same statement holds for any value other than $1$.

Theorem \ref{thm:dimthree} tells us to which other polynomials this bound can be generalized.
Compare this statement with Corollary \ref{cor:repvaloncurves}, 
where a sharper bound is proved for a point set contained in an algebraic curve of bounded degree.

\begin{corollary}\label{cor:repeated}
Let $F\in \C[x,y,s,t]$, let $P\subset \C^2$ be a finite set, and let $a\in \C$.
Then the number of times $F$ takes the value $a$ on $P$ satisfies
\[|\{(x,y,s,t)\in P\times P: F(x,y,s,t)=a\}| = O_{d,\eps}(|P|^{4/3+\eps}),\]
unless the polynomial $F-a$ is Cartesian. 
Over $\R$, the $\eps$ can be omitted.
\end{corollary}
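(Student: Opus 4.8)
The plan is to derive Corollary \ref{cor:repeated} as a direct application of Theorem \ref{thm:dimthree} (and, in degenerate cases, Theorem \ref{thm:dimtwo}) with $P=Q$. First I would set $G(x,y,s,t)=F(x,y,s,t)-a$ and consider the variety $X=Z(G)\subset\C^4$, whose degree is $O_d(1)$ since $\deg G=\deg F=d$. The quantity we want to bound is exactly $|X\cap(P\times P)|$. I would split into cases according to $\dim X$. If $\dim X=4$, then $G\equiv 0$, so $F\equiv a$ is constant and is trivially Cartesian (write $G=0$, or note $F-a=0$ has the Cartesian form with $H=L=0$), so this case falls under the stated exception. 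If $\dim X\le 0$, Bézout gives $|X\cap(P\times P)|=O_d(1)$, which is well within the claimed bound. If $\dim X\in\{1,2\}$, Theorem \ref{thm:dimtwo} gives $|X\cap(P\times P)|=O_d(|P|+|P|)=O_d(|P|)$ unless $X$ is Cartesian; and $O_d(|P|)$ is stronger than $O_{d,\eps}(|P|^{4/3+\eps})$. If $\dim X=3$, Theorem \ref{thm:dimthree} with $P=Q$ gives $|X\cap(P\times P)|=O_{d,\eps}(|P|^{2/3+\eps}|P|^{2/3}+|P|+|P|)=O_{d,\eps}(|P|^{4/3+\eps})$ unless $X$ is Cartesian, and over $\R$ the $\eps$ disappears by the real case of Theorem \ref{thm:dimthree}.

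The one point requiring a little care is matching up the exception clauses: Theorems \ref{thm:dimtwo} and \ref{thm:dimthree} are stated with ``unless $X$ is Cartesian'' (a condition on the variety $X=Z(G)$ and its ideal $I(X)$), whereas Corollary \ref{cor:repeated} is stated with ``unless the polynomial $F-a$ is Cartesian'' (a condition on the single polynomial $G=F-a$). I would observe that $G\in I(X)$ always, so if $X$ is Cartesian then by Definition \ref{def:cartesian} every element of $I(X)$, in particular $G$, is $(G_0,K_0)$-Cartesian for some common non-constant $G_0,K_0$; hence $G=F-a$ is Cartesian in the sense of Definition \ref{def:cartesian}. Thus ``$X$ Cartesian'' implies ``$F-a$ Cartesian'', so whenever $F-a$ is \emph{not} Cartesian, $X$ is not Cartesian and the theorems apply. (The converse implication is not needed and in fact may fail, but that only makes the corollary's hypothesis more generous, which is fine.) I should also note that the passage from ``dimension three, write $X=Z(F')$ with $\deg F'\le d$'' inside the proof of Theorem \ref{thm:dimthree} is internal to that theorem; here we simply invoke Theorem \ref{thm:dimthree} as a black box with the variety $X=Z(G)$ of degree $O_d(1)$, so no further bookkeeping on the defining polynomial is needed.

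The step I expect to be the only real (though minor) obstacle is precisely this reconciliation of the two notions of ``Cartesian'' and making sure the case analysis on $\dim X$ is exhaustive and that each case's bound is dominated by $O_{d,\eps}(|P|^{4/3+\eps})$ — all of which is routine once spelled out. I would also explicitly flag, as the excerpt itself does for the analogous unit-distance rephrasing, that although the statement is phrased for a fixed value $a$, the same argument with $a$ replaced by any other element of $\C$ gives the identical conclusion, and that over $\R$ one uses the real version of Theorem \ref{thm:dimthree} (keeping in mind the remark after Theorem \ref{thm:dimthree} that $X$ is still treated as a complex variety with its complex dimension). No heavy computation is involved; the corollary is essentially a specialization $Q:=P$ of the two main theorems together with the observation that linear and constant bounds are absorbed into $|P|^{4/3+\eps}$.
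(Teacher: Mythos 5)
Your proposal is correct and matches the paper's intended argument: the paper states Corollary \ref{cor:repeated} as a direct specialization of Theorem \ref{thm:dimthree} with $X=Z(F-a)$ and $Q=P$, and your reconciliation of the exception clauses (``$X$ Cartesian'' implies ``$F-a$ Cartesian'' since $F-a\in I(X)$) is exactly the needed observation. The only cosmetic remark is that the cases $\dim X\in\{1,2\}$ are vacuous, since over $\C$ the zero set of a nonconstant polynomial in $\C^4$ always has pure dimension three, so your case analysis is harmlessly more general than necessary.
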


The condition that $F-a$ is not Cartesian is necessary,
since if $F-a$ is $(G,K)$-Cartesian, 
then we can take half of the points of $P$ in $Z(G)$ and the other half in $Z(K)$,
to get $\Omega(|P|^2)$ repeated values.
As in Theorem \ref{thm:dimthree}, the bound is best possible in general.
That the bound is best possible for certain ``distances'' was already observed by Valtr \cite{V}, who constructed a strictly convex norm, based on the polynomial $F = (x-s)^2+y-t$, for which the upper bound $O(|P|^{4/3})$ is tight.
On the other hand, it is a major open problem in discrete geometry to show that for the Euclidean distance, this bound is not tight.

\subsection{Distinct values of polynomials}
Erd\H os \cite{Er} also asked for the minimum number of \emph{distinct} distances determined by a finite set $P\subset \R^2$.
Guth and Katz \cite{GK} proved an almost tight bound $\Omega(|P|/\log|P|)$.
It is natural to ask the same question for other functions of pairs of points in the plane.
Roche-Newton and Rudnev \cite{RNR} proved the same lower bound for the ``Minkowski distance'' $(x-s)^2 - (y-t)^2$, and Rudnev and Selig \cite{RS} extended it to other quadratic distances.
Garibaldi, Iosevich, and Senger \cite[Chapter 9]{GIS} considered the dot product $xs+yt$, but only proved the bound $\Omega(|P|^{2/3})$ for the number of distinct dot products determined by $P$; 
improving this bound is an interesting open problem.
Iosevich, Roche-Newton, and Rudnev \cite{IRR} proved the slightly better bound $\Omega(|P|^{9/13})$ for the number of distinct values of $xt-ys$.
We are not aware of other such polynomials for which any bound has been proved.

As a corollary of Corollary \ref{cor:repeated}, we obtain a bound on the number of distinct values of a polynomial, for any polynomial for which it could hold.
The bound is basically $\Omega(|P|^{2/3})$, 
so relatively weak, 
but we do not know any more elementary proof of such a bound for general polynomials.
Any such proof would have to deal with the fact that there are exceptional polynomials that are related to the Cartesian property, so techniques like ours seem to be necessary.

\begin{corollary}\label{cor:distinct}
Let $F\in \C[x,y,s,t]$ and let $P\subset \C^2$ be a finite set.
Then the number of distinct values of $F$ on $P$ satisfies
\[|F(P\times P)| = \Omega_{d,\eps}(|P|^{2/3-\eps}),\]
unless there exists an $a\in \C$ such that $F-a$ is Cartesian. 
Over $\R$, the $\eps$ can be omitted.
\end{corollary}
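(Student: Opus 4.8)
The plan is to deduce Corollary~\ref{cor:distinct} from Corollary~\ref{cor:repeated} by a standard pigeonhole argument. Suppose $F$ takes only $N := |F(P\times P)|$ distinct values on $P\times P$. The total number of pairs is $|P\times P| = |P|^2$, so by pigeonhole there is some value $a \in F(P\times P)$ attained at least $|P|^2/N$ times. In other words,
\[
\left|\{(x,y,s,t)\in P\times P : F(x,y,s,t)=a\}\right| \geq \frac{|P|^2}{N}.
\]
If no $a\in\C$ makes $F-a$ Cartesian, then in particular $F-a$ is not Cartesian for this specific $a$, so Corollary~\ref{cor:repeated} applies and gives the upper bound $O_{d,\eps}(|P|^{4/3+\eps})$ on the left-hand side. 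Combining the two, $|P|^2/N = O_{d,\eps}(|P|^{4/3+\eps})$, which rearranges to $N = \Omega_{d,\eps}(|P|^{2/3-\eps})$. Over $\R$ the $\eps$ may be dropped throughout, since Corollary~\ref{cor:repeated} holds without $\eps$ in that case.

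The only genuine subtlety is the logical handling of the exceptional case: Corollary~\ref{cor:repeated} is stated with the hypothesis ``$F-a$ is not Cartesian'' for a \emph{fixed} $a$, whereas Corollary~\ref{cor:distinct} excludes the existence of \emph{any} $a$ with $F-a$ Cartesian. But this is exactly what makes the argument go through: the hypothesis of Corollary~\ref{cor:distinct} guarantees that for \emph{every} value $a$ — and in particular for the popular value produced by pigeonhole — the polynomial $F-a$ fails to be Cartesian, so Corollary~\ref{cor:repeated} is applicable to it. Thus there is no obstacle here beyond being careful about quantifier order.

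As a sanity check on the exceptional case itself, note that if $F-a$ is $(G,K)$-Cartesian for some $a$, then (as observed after Corollary~\ref{cor:repeated}) one can place roughly half of $P$ on $Z(G)$ and half on $Z(K)$ so that $F$ is constantly equal to $a$ on $\Omega(|P|^2)$ of the cross-pairs; such a point set determines only $O(1)$ distinct values from those cross-pairs, so no lower bound of the form $|P|^{2/3-\eps}$ can hold, confirming that the hypothesis cannot be removed. I expect the write-up of this corollary to be only a few lines, with the pigeonhole step and the quantifier bookkeeping being the entire content.
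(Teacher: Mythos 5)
Your proposal is correct and matches the paper's own argument: both deduce the bound from Corollary~\ref{cor:repeated} by a double-counting/pigeonhole step, with the hypothesis that no $F-a$ is Cartesian ensuring the repeated-values bound applies to every (in particular, the most popular) value. The paper phrases it as ``if every value is repeated at most $T=O_{d,\eps}(|P|^{4/3+\eps})$ times, then there are at least $|P|^2/T$ distinct values,'' which is the same computation.
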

\begin{proof}
If every value of $F$ is repeated at most $T$ times, then the $|P|^2$ pairs in $P\times P$ must give at least $|P|^2/T$ distinct values.
For every $a\in F(P\times P)$, the fact that $F-a$ is not Cartesian lets us apply Corollary \ref{cor:repeated} to conclude that $F$ has the value $a$ for $O_{d,\eps}(|P|^{4/3+\eps})$ pairs in $P\times P$.
The stated bound follows.
\end{proof}

Note that if $F-a$ is $(G,G)$-Cartesian for $a\in \C$, 
then $P\subset Z(G)$ gives $F(P\times P) = \{a\}$, so there cannot be a good lower bound on the number of distinct values of $F$.
However, if $F-a$ is $(G,K)$-Cartesian with $G\neq K$, this construction does not work.
Thus the right exception in Corollary \ref{cor:distinct} would probably be ``unless there exist $a\in \C$ and $G\in \C[x,y]$ such that $F-a$ is $(G,G)$-Cartesian''; unfortunately, this does not quite seem to follow from our proof.

The proof of Corollary \ref{cor:distinct} gives a somewhat stronger statement: There exists $p\in P$ such that $|F(p\times P)| = \Omega_{d,\eps}(|P|^{2/3-\eps})$, unless some $F-a$ is Cartesian.

\subsection{Repeated values of polynomial maps}

Just as Theorem \ref{thm:dimthree} gives us bounds on the number of repeated and distinct values of a single polynomial, 
Theorem \ref{thm:dimtwo} can give us such bounds for polynomial maps defined by two polynomials.
We focus on the case of polynomial maps $\mathcal{F}:\C^4\to\C^2$ of the form
\[\mathcal{F}(x,y,s,t)= (F_1(x,y,s,t), F_2(x,y,s,t))\]
with $F_1,F_2\in \C[x,y,s,t]$.
Note that for a finite set $P\subset \C^2$
 and a point $(a,b)\in\C^2$, 
\[Z(F_1-a,F_2-b)\cap (P\times P)\]
 is the set of pairs $(x,y),(s,t)\in P$ for which $\mathcal{F}(x,y,s,t) = (a,b)$.

\begin{corollary}\label{cor:maprepeated}
Let $F_1,F_2\in \C[x,y,s,t]$ be two polynomials of degree at most $d$, and let $\mathcal{F}:\C^4\to\C^2$ be the polynomial map defined by $\mathcal{F}(x,y,s,t)= (F_1(x,y,s,t), F_2(x,y,s,t))$.
For $P\subset \C^2$ and $(a,b)\in \C^2$, we have
\[|\{(x,y,s,t)\in P\times P: \mathcal{F}(x,y,s,t)=(a,b)\}| = O_d(|P|),\]
unless $F_1-a$ and $F_2-b$ have a common factor, or there are $G\in \C[x,y]\backslash\C,K\in\C[s,t]\backslash\C$ such that $F_1-a$ and $F_2-b$ are both $(G,K)$-Cartesian. 
\end{corollary}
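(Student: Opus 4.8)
The plan is to reduce Corollary~\ref{cor:maprepeated} to Theorem~\ref{thm:dimtwo} by showing that the variety $X = Z(F_1-a, F_2-b) \subset \C^4$ has dimension at most two, and that it is Cartesian only in the two exceptional cases listed. Since $X$ is cut out by two equations, every irreducible component of $X$ has codimension at least one, i.e.\ dimension at most three; components of dimension three would force us into the regime of Theorem~\ref{thm:dimthree}, which only gives the weaker bound $O(|P|^{4/3+\eps})$, so the first order of business is to understand when such a component appears. A three-dimensional component $Y$ of $X$ has codimension one, hence $Y = Z(H)$ for some irreducible $H$, and $H$ must divide both $F_1-a$ and $F_2-b$ (because $Y \subset Z(F_1-a)$ and $Y$ is irreducible of codimension one). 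Thus $X$ has a three-dimensional component if and only if $F_1-a$ and $F_2-b$ share a nonconstant common factor, which is precisely the first exception. So outside that exception, $X$ has dimension at most two and Theorem~\ref{thm:dimtwo} applies directly, giving $|X \cap (P\times P)| = O_d(|P|)$ unless $X$ is Cartesian.

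It then remains to translate ``$X$ is Cartesian'' into the algebraic condition stated. By Definition~\ref{def:cartesian}, $X$ is Cartesian iff there exist nonconstant $G \in \C[x,y]$, $K \in \C[s,t]$ such that every $F \in I(X)$ is $(G,K)$-Cartesian; in particular $F_1 - a$ and $F_2 - b$ (which lie in $I(X)$) must both be $(G,K)$-Cartesian, which is exactly the second exception. Conversely, if $F_1-a$ and $F_2-b$ are both $(G,K)$-Cartesian with $G,K$ nonconstant, then $Z(G)\times Z(K) \subset Z(F_1-a)\cap Z(F_2-b) = X$, and choosing $P$ to be half in $Z(G)$ and half in $Z(K)$ forces $|X \cap (P\times P)| = \Omega(|P|^2)$, so no linear bound can hold --- confirming that the exception is genuinely needed. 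One subtle point is the direction used in the proof: we only need that $X$ being Cartesian \emph{implies} one of the exceptions, and that implication is immediate since $F_1 - a, F_2 - b \in I(X)$. Thus contrapositively, if neither exception holds then $X$ is not Cartesian and has dimension at most two, so Theorem~\ref{thm:dimtwo} yields the claimed $O_d(|P|)$ bound.

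There is one gap to be careful about: Theorem~\ref{thm:dimtwo} requires $X$ to have dimension \emph{exactly} one or two (it excludes the zero-dimensional and four-dimensional cases only trivially, but we must make sure $X$ is nonempty and not all of $\C^4$). If $X = \emptyset$ the bound is vacuous; if $X$ has a zero-dimensional part, B\'ezout gives $O_d(1)$ for that part, which is absorbed into $O_d(|P|)$; and $X = \C^4$ would force $F_1 = a$ and $F_2 = b$ identically, a degenerate case that falls under the common-factor exception (or can be handled separately, as then $\mathcal F$ is constant). So the only real content is the reduction above. I expect the main obstacle to be nothing deep --- it is a bookkeeping argument --- but the step requiring the most care is the claim that a three-dimensional component of $X$ must come from a \emph{common} factor of $F_1 - a$ and $F_2 - b$; this relies on the fact that an irreducible variety of codimension one in $\C^4$ is the zero set of a single irreducible polynomial (cf.\ \cite[Theorem 1.21]{Shaf}), and on the primality of that polynomial's ideal to conclude it divides both $F_i - a$ and $F_2 - b$.
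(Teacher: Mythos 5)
Your proposal is correct and follows essentially the same route as the paper: the paper's own proof is the one-line observation that if $F_1-a$ and $F_2-b$ have no common factor then $Z(F_1-a,F_2-b)$ has dimension at most two, so Theorem \ref{thm:dimtwo} applies, with the Cartesian exception translating into both $F_1-a$ and $F_2-b$ being $(G,K)$-Cartesian since they lie in $I(X)$. Your additional details (a three-dimensional component forces a common irreducible factor via \cite[Theorem 1.21]{Shaf}, and the degenerate cases) just make explicit what the paper leaves implicit.
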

\begin{proof}
If $F_1-a$ and $F_2-b$ do not have a common factor, 
then $Z(F_1-a,F_2-b)$ has dimension two, so that we can apply Theorem \ref{thm:dimtwo} to $X = Z(F_1-a,F_2-b)$.
\end{proof}

Both exceptions are necessary.
As before, if $F_1-a$ and $F_2-b$ are both $(G,K)$-Cartesian, then we can get $|Z(F_1-a,F_2-b)\cap (P\times P)| = \Omega(|P|^2)$.
If $F_1-a$ and $F_2-b$ have a common factor $F_3$, 
then $|Z(F_1-a,F_2-b)\cap (P\times P)| \geq |Z(F_3)\cap (P\times P)|$, and we know from the constructions in Section \ref{sec:intro} that there are polynomials $F_3$ for which we have $|Z(F_3)\cap (P\times P)| = \Omega(|P|^{4/3})$.
Of course, Theorem \ref{thm:dimthree} could in this case be used to get the upper bound $O_{d,\eps}(|P|^{4/3+\eps})$.

\subsection{Distinct values of polynomial maps}
Finally, we consider distinct values of polynomial maps $\C^4\to \C^2$.
We say that $F_1$ and $F_2$ are \emph{inner equivalent} if there exist $\varphi_1,\varphi_2\in \C[z]$ and $\psi\in\C[x,y,s,t]$ such that $F_1 = \varphi_1\circ \psi$ and $F_2 = \varphi_2\circ \psi$.
Note that if $F_1$ and $F_2$ are inner equivalent, then the map $\mathcal{F} = (F_1,F_2):\C^4\to \C^2$ factors into $\psi:\C^4\to \C$ and $(\varphi_1,\varphi_2):\C\to \C^2$.
In this case, the number of distinct values of $\mathcal{F}$ is essentially the same as that of $\psi$, and thus Corollary \ref{cor:distinct} is the relevant statement.

\begin{corollary}\label{cor:mapdistinct}
Let $F_1,F_2\in \C[x,y,s,t]$ be two polynomials of degree at most $d$, and let $\mathcal{F}:\C^4\to\C^2$ be the polynomial map defined by $\mathcal{F}(x,y,s,t)= (F_1(x,y,s,t), F_2(x,y,s,t))$.
For $P\subset \C^2$, we have 
\[|\mathcal{F}(P\times P)| = \Omega_d(|P|),\]
unless $F_1$ and $F_2$ are inner equivalent, 
or there is an $(a,b)\in \C^2$ such that $F_1-a$ and $F_2-b$ are both $(G,K)$-Cartesian.
\end{corollary}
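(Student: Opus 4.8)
The plan is to mimic the proof of Corollary \ref{cor:distinct}, using Corollary \ref{cor:maprepeated} in place of Corollary \ref{cor:repeated}, and then to check that the two exceptional cases in Corollary \ref{cor:maprepeated} can be amalgamated, uniformly over the values $(a,b)$, into the two exceptions listed in the present statement. Concretely: suppose that no $(a,b)\in\C^2$ makes both $F_1-a$ and $F_2-b$ $(G,K)$-Cartesian, and suppose also that $F_1,F_2$ are not inner equivalent. I want to conclude that every value $(a,b)$ of $\mathcal F$ is attained $O_d(|P|)$ times on $P\times P$, so that $|\mathcal F(P\times P)|\geq |P|^2/O_d(|P|) = \Omega_d(|P|)$, since the $|P|^2$ pairs in $P\times P$ are distributed among the values.

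The work is thus in verifying, for a fixed value $(a,b)$ in the image of $\mathcal F$, that the hypotheses of Corollary \ref{cor:maprepeated} are met, i.e.\ that $F_1-a$ and $F_2-b$ share no common factor and are not simultaneously $(G,K)$-Cartesian. The second condition is exactly excluded by our assumption. For the first, I would argue: if $F_1-a$ and $F_2-b$ have a common irreducible factor $F_3$ for some value $(a,b)$ attained by $\mathcal F$, then $Z(F_3)$ is a three-dimensional component of $Z(F_1-a)$, and on $Z(F_3)$ both $F_1\equiv a$ and $F_2\equiv b$ are constant. The key step is to turn a common factor into inner equivalence. The point is that if $F_1-a$ and $F_2-b$ have a common factor, then $F_1$ and $F_2$ are "algebraically dependent" — their Jacobian $\partial(F_1,F_2)/\partial(\text{two of the variables})$ vanishes on the common component, hence (varying $(a,b)$ over the image) identically, forcing $F_1,F_2$ to be functionally dependent, which over $\C$ means $F_1 = \varphi_1\circ\psi$ and $F_2 = \varphi_2\circ\psi$ for some $\psi\in\C[x,y,s,t]$ and $\varphi_1,\varphi_2\in\C[z]$ — i.e.\ inner equivalence, contradicting our assumption. (Here one uses that $F_1$ and $F_2$ are algebraically dependent over $\C$ if and only if the curve they cut out in the plane parametrized by $(F_1,F_2)$ is one-dimensional, which is the classical statement about functional dependence of polynomials; and for $\C[z_1,z_2]$-polynomials algebraic dependence upgrades to the existence of a common "inner" factor $\psi$ because the image curve, being irreducible and rational in a suitable sense, can be parametrized.)

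The main obstacle I anticipate is precisely this last step: passing cleanly from "$F_1-a$ and $F_2-b$ have a common factor for some attained value $(a,b)$" to "$F_1$ and $F_2$ are inner equivalent." A common factor for a \emph{single} value $(a,b)$ does not obviously force functional dependence; I will need to observe that the relevant algebraic relation, once it holds on a positive-dimensional set, must hold identically, and then invoke the structure theory of polynomials with a common composition factor (as in work on polynomial decomposition / the theory of Stein factorization for $\C^4\to\C^2$). An alternative, cleaner route would be to apply Corollary \ref{cor:maprepeated} directly and, in the common-factor case, note that the common factor $F_3$ itself must then be Cartesian for \emph{every} value $(a,b)$ in the image — because if $F_3$ is not Cartesian, Theorem \ref{thm:dimtwo} applied to $X=Z(F_3)$ would already bound $|Z(F_3)\cap(P\times P)|$ linearly, which is not enough on its own but \emph{is} enough to handle that value — so one can simply absorb the common-factor case by splitting $P\times P$ according to which value is hit and which component of $Z(F_1-a)$ carries it, and reduce to the truly exceptional configurations. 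I would try the direct route first and fall back on the functional-dependence argument only if the bookkeeping over all values $(a,b)$ becomes unwieldy; in either case the $\Omega_d(|P|)$ conclusion, and even the stronger "some $p\in P$ has $|\mathcal F(p\times P)| = \Omega_d(|P|)$" in the style of the remark after Corollary \ref{cor:distinct}, should come out.
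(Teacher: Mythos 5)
There is a genuine gap, and it sits exactly where you anticipated trouble. The central claim of your primary route --- that once the two listed exceptions are excluded, \emph{every} attained value $(a,b)$ has $F_1-a$ and $F_2-b$ coprime, so that Corollary \ref{cor:maprepeated} gives $O_d(|P|)$ preimages for every value --- is false. Take $F_1 = xs-y+t$ and $F_2 = x\,(xs-y+t)$. One checks that $F_1-a$ is never Cartesian (if $Z(G)\times Z(K)\subset Z(F_1-a)$ then $Z(G)$ or $Z(K)$ is forced into a vertical line, and the linear dependence on $y$ resp.\ $t$ gives a contradiction), so no $(a,b)$ makes both $F_1-a$ and $F_2-b$ $(G,K)$-Cartesian; and they are not inner equivalent, since $F_1-\lambda$ is irreducible for every $\lambda$, so inner equivalence would make $F_2$ a polynomial in $F_1$, which $xF_1$ is not. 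Yet at $(a,b)=(0,0)$ the polynomials share the non-Cartesian factor $xs-y+t$, the fiber of $\mathcal F$ over $(0,0)$ is the three-dimensional hypersurface $Z(xs-y+t)$, and by Elekes' construction this fiber can meet $P\times P$ in $\Omega(|P|^{4/3})$ pairs. So the per-value bound $O_d(|P|)$ cannot hold uniformly under your hypotheses. The Jacobian argument fails for the reason you feared: a common factor at a \emph{single} value only forces the $2\times 2$ minors of the Jacobian of $(F_1,F_2)$ to vanish on that one hypersurface, not identically; identical vanishing (and hence any dependence conclusion) would require infinitely many values whose fibers contain hypersurface components.

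The missing ingredient, which the paper's proof uses, is a quantitative Stein-type theorem (Bodin's several-variable generalization): after reducing to non-composite $F_1,F_2$ (harmless, since stripping outer compositions changes the number of distinct values by a factor $O_d(1)$), there are at most $2d^2$ values $a$ with $F_1-a$ reducible and at most $2d^2$ values $b$ with $F_2-b$ reducible. For the $O_d(1)$ ``bad'' pairs $(a,b)\in S_1\times S_2$ one only gets $O_{d,\eps}(|P|^{4/3+\eps})+O_d(|P|)$ preimages per value (Corollary \ref{cor:repeated} applied to the common factor, Corollary \ref{cor:maprepeated} to the coprime quotients), but since there are only $O_d(1)$ such values they account for at most $\tfrac12|P|^2$ pairs when $|P|$ is large; the remaining $\ge\tfrac12|P|^2$ pairs map to values where irreducibility plus your excluded cases yield $O_d(|P|)$ preimages, giving $\Omega_d(|P|)$ distinct values. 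Your fallback route does not supply this count of bad values --- and it also misapplies Theorem \ref{thm:dimtwo} to $Z(F_3)$, which is three-dimensional, so only a $|P|^{4/3}$-type bound is available there. Without a bound, depending only on $d$, on the number of values $(a,b)$ for which $F_1-a$ and $F_2-b$ can share a factor, the argument does not close.
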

\begin{proof}
We use a generalization of Stein's theorem \cite{St} to more variables, a proof of which can be found for instance in \cite{B}.
It states that for a non-composite polynomial $F$ over $\C$ in any number of variables, the number of $\lambda\in \C$ for which the polynomial $F-\lambda$ is reducible is at most $2\deg(F)^2$ (a polynomial is \emph{composite} if there is a nonlinear univariate polynomial $\varphi$ and a polynomial $\psi$ such that $F = \varphi\circ \psi$).

We can assume that $F_1,F_2$ are non-composite, because if $F_1 = \varphi\circ \psi$, then $|F_1(P\times P)|\geq \frac{1}{d}|\psi(P\times P)|$, and similarly for $F_2$, so the number of distinct values of $\mathcal{F}$ is asymptotically unchanged by removing the outside functions.
Thus there are sets $S_1,S_2\subset \C$ of size at most $2d^2$, such that for $a\not\in S_1$, $F_1-a$ is irreducible, and for $b\not\in S_2$, $F_2-b$ is irreducible.

Let $f(d)$ be the function such that the upper bound in Corollary \ref{cor:maprepeated} is $f(d)|P|$.
We can assume that $|F_1(P\times P)|$ and $|F_2(P\times P)|$ are less than $|P|/(16d^2f(d))$, since otherwise we are done.

Consider $a\in S_1$.
By Corollary \ref{cor:repeated}, 
there are $O_{d,\eps}(|P|^{4/3+\eps})$ pairs $(p,q)\in P\times P$ for which $F_1(p,q)=a$.
Now consider $b\in  F_2(P\times P)$.
Let $F_3$ be the greatest common divisor of $F_1-a$ and $F_2-b$, and set $\widetilde{F}_1 = (F_1-a)/F_3$ and $\widetilde{F}_2=(F_2-b)/F_3$.
The pairs $(p,q)\in P\times P$ for which $F_3(p,q)=0$ have been counted among the ones with $F_1(p,q)=a$, so it remains to count the ones with $\widetilde{F}_1(p,q)=0$ and $\widetilde{F}_2(p,q)=0$.
If one of $\widetilde{F}_1 $ and $\widetilde{F}_2$ is constant, then the variety $Z(\widetilde{F}_1,\widetilde{F}_2)$ is empty.
Otherwise, $\widetilde{F}_1 $ and $\widetilde{F}_2$ are coprime and $Z(\widetilde{F}_1,\widetilde{F}_2)$ has dimension at most two,  
so by Corollary \ref{cor:maprepeated} there are $O_d(|P|)$ pairs $(p,q)\in P\times P$ for which $\widetilde{F}_1(p,q) =0$ and $\widetilde{F}_2(p,q)=0$.

Thus, 
for a fixed $a\in S_1$, we count 
\[O_{d,\eps}(|P|^{4/3+\eps}) + |F_2(P\times P)|\cdot f(d)|P| \leq \frac{1}{8d^2}|P|^2\]
pairs $(p,q)\in P\times P$ for which $\mathcal{F}(p,q) = (a,b)$ for some $b\in F_2(P\times P)$.
Summing over all $a\in S_1$, 
we have fewer than $\frac{1}{4}|P|^2$ pairs $(p,q)\in P\times P$ for which $\mathcal{F}(p,q) = (a,b)$ for some $(a,b)\in S_1\times F_2(P\times P)$.
By a symmetric argument, 
we have fewer than $\frac{1}{4}|P|^2$ pairs $(p,q)\in P\times P$ for which $\mathcal{F}(p,q) = (a,b)$ for some $(a,b)\in F_1(P\times P)\times S_2$.
Altogether, there are fewer than $\frac{1}{2}|P|^2$ pairs for which $\mathcal{F}(p,q)\in S_1\times S_2$.
Consequently, there are at least $\frac{1}{2}|P|^2$ pairs $(p,q)\in P\times P$ for which $\mathcal{F}(p,q) = (a,b)$ with $a\not\in S_1$ and $b\not\in S_2$.

Now consider $a\not\in S_1$ and $b\not\in S_2$.
We have $F_1-a$ and $F_2-b$ both irreducible, which implies that they do not have a common factor, 
unless $F_1 = \alpha F_2 + \beta$ for some $\alpha,\beta\in \C$.
But that would mean that $F_1$ and $F_2$ are inner equivalent (moreover, they were inner equivalent before we removed any outside functions).
Thus $F_1-a$ and $F_2-b$ do not have a common factor, and $Z(F_1-a,F_2-b)$ has dimension two.
By Corollary \ref{cor:maprepeated},
there are $O_d(|P|)$ pairs $(p,q)\in P\times P$ such that $\mathcal{F}(p,q) = (a,b)$, unless $F_1-a$ and $F_2-b$ are both $(G,K)$-Cartesian for some $G,K$.
Since there are at least $\frac{1}{2}|P|^2$ pairs $(p,q)\in P\times P$ for which $\mathcal{F}(p,q)=(a,b)$ with $a\not\in S_1$ and $b\not\in S_2$,
there must be $\Omega_d(|P|)$ distinct $(a,b)\in \mathcal{F}(P\times P)$.
\end{proof}

There are maps for which the bound in Corollary \ref{cor:mapdistinct} is tight.
Consider the vector addition map $\mathcal{F}(p,q) = p+ q = (x+s,y+t)$. 
Clearly, if $P$ is a set of points in arithmetic progression (and thus on a line), then $|\mathcal{F}(P\times P)| = O(|P|)$.
The polynomials $F_1 = x+s$ and $F_2 = y+t$ are not inner equivalent, and $F_1-a$ and $F_2-b$ are never both $(G,K)$-Cartesian (both are Cartesian, but with different $G,K$).



\section{Discussion}
\label{sec:discussion}

\paragraph{Tightness.}
The main open question regarding Theorem \ref{thm:dimthree} is for which varieties the bound is tight, and for which it can be improved.
As mentioned in the Introduction, the bound is tight for various varieties, while for others it is conjectured that improvement is possible.

\begin{question}
For which non-Cartesian polynomials $F\in \C[x,y,s,t]$ and sets $P,Q$ of size $n$ is the bound $|Z(F)\cap (P\times Q)| = O(n^{4/3})$ tight, 
and for which is it not?
\end{question}

Of course, this is just a generalization of the well-known unit distance problem,
but this view does raise some new possibilities.
It may be possible to find a polynomial for which the bound can be improved more easily than for the Euclidean distance polynomial.
It would also be interesting to find and classify more polynomials for which the bound is tight.

\paragraph{About the $\eps$.}
Note that Theorem \ref{thm:shefferzahl} is conjectured to hold without the $\eps$, which would imply the same for our main theorem.
Zahl \cite{Z} did prove such a bound without $\eps$, but with more restrictive conditions.
Specifically, his theorem requires that the incidences occur at  transversal intersection points of the curves, and that the curves are smooth.

We believe we can handle the first condition in our situation, because for our curves common tangent lines should be relatively rare.
We could handle the second condition if we assume $Z(F)$ to be nonsingular, because then only a relatively small subset of our curves could be singular.
However, if $Z(F)$ is singular, then it could be that all our curves are singular, in which case we do not see a way to apply Zahl's theorem.

\paragraph{About the Cartesian form.}
We currently do not have a good method or algorithm for proving that a polynomial is not Cartesian, and we do not know any instance where our algebraic condition (that $F$ is not Cartesian) is easier to establish than the combinatorial condition of Theorem \ref{thm:shefferzahl} (that the incidence graph does not contain certain complete bipartite subgraphs).

\begin{question}
Is there an algorithm that determines if a polynomial $F\in \C[x,y,s,t]$ is Cartesian, i.e., if it can be written as $F(x,y,s,t) = G(x,y)H(x,y,s,t) + K(s,t)L(x,y,s,t)$ with $G$ and $K$ not constant?
\end{question}

\paragraph{Related results.}
Theorem \ref{thm:dimthree} can be thought of as a variant of the Pach-Sharir incidence bound, where a combinatorial condition is replaced by an algebraic one.
Specifically, Theorem \ref{thm:shefferzahl} gives a bound on $I(\pts,\cvs)$ if the incidence graph of $\pts$ and $\cvs$ does not contain a $K_{2,M}$ or $K_{M,2}$.
In most algebraic applications of this theorem, there is a polynomial $F\in \R[x,y,s,t]$ such that the curves in $\cvs$ are of the form $\{p:F(p,q_i)=0\}$ for fixed $q_i$ in some finite set $Q$; see for instance \cite{RSS, SSS,Sh}.
In such applications, our theorem replaces the combinatorial condition on the incidence graph by the purely algebraic condition that $F$ is not Cartesian.

Yet another way to view Theorem \ref{thm:dimthree} is as follows.
An \emph{algebraic graph} on $\R^2\times \R^2$ is a bipartite graph whose two parts are finite sets $P,Q\subset \R^2$, with edges defined by a polynomial $F\in \R[x,y,s,t]$, i.e., there is an edge between $p\in P$ and $q\in Q$ if and only if $F(p,q)=0$.
Fox et al. \cite{FPSSZ} proved that if an algebraic graph on $\R^2\times \R^2$ contains no $K_{t,t}$, then the number of edges is bounded by $O_t(|P|^{2/3}|Q|^{2/3}+|P|+|Q|)$ (in fact, they proved much more general statements for semialgebraic graphs with vertex sets in $\R^d$).
Again, Theorem \ref{thm:dimthree} states the same bound, but replaces the combinatorial condition by an algebraic condition.


\paragraph{Possible extensions.}
In this paper, we have intentionally not looked beyond $\C^4$, but there are natural questions to ask for other Cartesian products. We mention a few, without going into too much detail.

We can consider varieties $X\subset\C^D\times \C^E$ and try to bound the intersection $|X\cap (P\times Q)|$ for $P\subset \C^D, Q\subset \C^E$.
There will be exceptions for $X$ defined by ``Cartesian'' polynomials of the form $GH+KL$, with $G$ a polynomial in the first $D$ variables and $K$ a polynomial in the last $E$ variables.
In analogy with Theorem \ref{thm:dimtwo}, we would expect that in the case $D=E=\dim(X)$ the bound will be $O(|P|+|Q|)$.
For $X$ of intermediate dimension, we would expect intermediate exponents, probably related to those in higher-dimensional incidence theorems.

In another direction, we can take $X\subset (\C^2)^k$ and consider $|X\cap \prod_{i=1}^k P_i|$ for $P_i\subset \C^2$.
There will be exceptional varieties $X$ defined by polynomials of the form $\sum_{i=1}^k G_iH_i$, with each $G_i$ a function of the $i$-th pair of variables.
In analogy with the general Schwartz-Zippel lemma, 
we expect that if $\dim(X)= 2\ell$, then the bound will be $O(|P|^\ell)$.

Of course one can combine these two directions and look at arbitrary products $\prod \C^{D_i}$.
All such results would come with corresponding corollaries for value sets of polynomial maps.

Another direction to proceed in is to ask the same questions for other fields, in particular finite fields.
Our proof of Theorem \ref{thm:dimtwo} seems to carry through for any algebraically closed field, but we have restricted ourselves to $\C$ for simplicity.
Theorem \ref{thm:dimthree}, however, relies on the incidence bound in Theorem \ref{thm:shefferzahl}, 
which does not yet have any analogue over finite fields (at least not with the same algebraic flexibility).




\bibliographystyle{amsplain}

\begin{thebibliography}{99}

\bibitem{A}
Noga Alon,
\emph{Combinatorial Nullstellensatz},
Combinatorics, Probability and Computing {\bf 8}:7--29, 1999.

\bibitem{B}
Arnaud Bodin, 
\emph{Reducibility of rational functions in several variables},
Israel Journal of Mathematics {\bf 164}:333--347, 2008.

\bibitem{BM}
Adrian Bondy and U.S.R. Murty,
\emph{Graph Theory},
Springer, 2008.

\bibitem{BGT}
Emmanuel Breuillard, Ben Green, and Terence Tao,
\emph{Approximate subgroups of linear groups},
Geometric And Functional Analysis {\bf 21}:774--819, 2011.

\bibitem{CLOS}
 David Cox, John Little, and Donal O'Shea,
\emph{Ideals, Varieties, and Algorithms}, third edition, Springer, 2007.

\bibitem{El}
Gy\"orgy Elekes,
\emph{SUMS versus PRODUCTS in Number Theory, Algebra and Erd\H os Geometry},
Paul Erd\H os and his Mathematics II, 
Bolyai Society Mathematical Studies {\bf 11}:241--290, 2002.

\bibitem{Er}
Paul Erd\H os,
\emph{On sets of distances of $n$ points},
American Mathematical Monthly {\bf 53}:248--250, 1946.

\bibitem{FPSSZ}
Jacob Fox, J\'anos Pach, Adam Sheffer, Andrew Suk, and Joshua Zahl,
\emph{A semi-algebraic version of Zarankiewicz's problem},
Journal of the European Mathematical Society {\bf 19}:1785--1810, 2017.

\bibitem{GIS}
Julia Garibaldi, Alex Iosevich, and Steven Senger,
\emph{The Erd\H os Distance Problem},
American Mathematical Society, 2011.

\bibitem{GK}
Larry Guth and Nets Hawk Katz,
\emph{On the Erd{\H o}s distinct distances problem in the plane},
Annals of Mathematics {\bf 181}:155--190, 2015.

\bibitem{IRR}
Alex Iosevich, Oliver Roche-Newton, and Misha Rudnev,
\emph{On discrete values of bilinear forms},
{\tt arXiv:1512.02670}, 2015.

\bibitem{L}
Richard Lipton,
blog post, available at \href{http://rjlipton.wordpress.com/2009/11/30/the-curious-history-of-the-schwartz-zippel-lemma/}{\tt http://rjlipton.wordpress.com/2009/11/30/~} \href{http://rjlipton.wordpress.com/2009/11/30/the-curious-history-of-the-schwartz-zippel-lemma}{\tt the-curious-history-of-the-schwartz-zippel-lemma/}, 2009.



\bibitem{PS}
J\'anos Pach and Micha Sharir,
\emph{Repeated angles in the plane and related problems},
Journal of Combinatorial Theory, Series A 
{\bf 59}:12--22, 1992.

\bibitem{PZ}
J\'anos Pach and Frank de Zeeuw,
\emph{Distinct distances on algebraic curves in the plane},
Combinatorics, Probability and Computing {\bf 26}:99--117, 2017.

\bibitem{RSS}
Orit Raz, Micha Sharir, and J\'ozsef Solymosi,
\emph{Polynomials vanishing on grids: The Elekes-R\'onyai problem revisited},
American Journal of Mathematics 
{\bf 138}:1029--1065, 2016.

\bibitem{RSZ}
Orit Raz, Micha Sharir, and Frank de Zeeuw,
\emph{Polynomials vanishing on Cartesian products: The Elekes-Szab\'o Theorem revisited},
Duke Mathematical Journal {\bf 165}:3517--3566, 2016.

\bibitem{RSZ15}
Orit Raz, Micha Sharir, and Frank de Zeeuw,
\emph{The Elekes-Szab\'o Theorem in four dimensions},
{\tt arXiv:1607.03600}, 2016.
To appear in Israel Journal of Mathematics.

\bibitem{RNR}
Oliver Roche-Newton and Misha Rudnev,
\emph{On the Minkowski distances and products of sum sets},
Israel Journal of Mathematics 
{\bf 209}:507--526, 2015.

\bibitem{RS}
Misha Rudnev and Jon Selig,
\emph{On the use of the Klein quadric for geometric incidence problems in two dimensions},
SIAM Journal on Discrete Mathematics 
{\bf 30}:934--954, 2016.

\bibitem{Shaf}
Igor Shafarevich,
\emph{Basic Algebraic Geometry I: Varieties in Projective Space},
third edition, Springer, 2013.

\bibitem{SSS}
Micha Sharir, Adam Sheffer, and J\'ozsef Solymosi,
\emph{Distinct distances on two lines},
Journal of Combinatorial Theory, Series A 
{\bf 120}:1732--1736, 2013.

\bibitem{SSZ}
Adam Sheffer, Endre Szab\'o, and Joshua Zahl,
\emph{Point-curve incidences in the complex plane},
{\tt arXiv:1502.07003}, 2015.
To appear in Combinatorica.

\bibitem{Sh}
Chun-Yen Shen,
\emph{Algebraic methods in sum-product phenomena},
Israel Journal of Mathematics {\bf 188}:123--130, 2012.

\bibitem{SST}
Joel Spencer, Endre Szemer\'edi, and William Trotter,
\emph{Unit distances in the Euclidean plane},
Graph Theory and Combinatorics, Academic Press, 293--303, 1984.

\bibitem{St}
Yosef Stein, 
\emph{The total reducibility order of a polynomial in two variables}, 
Israel Journal of Mathematics 
\textbf{68}:109--122, 1989.

\bibitem{ST}
Endre Szemer\'{e}di and William Trotter,
\emph{Extremal problems in discrete geometry},
Combinatorica {\bf 3}:381--392, 1983.

\bibitem{VZ}
Claudiu Valculescu and Frank de Zeeuw,
\emph{Distinct values of bilinear forms on algebraic curves},
Contributions to Discrete Mathematics {\bf 11}:31--45, 2016.

\bibitem{V}
Pavel Valtr,
\emph{Strictly  convex  norms  allowing  many
unit  distances  and  related  touching  questions},
manuscript, 2005.

\bibitem{Z}
Joshua Zahl,
\emph{A Szemer\'edi-Trotter type theorem in $\R^4$},
Discrete \& Computational Geometry {\bf 54}:513--572, 2015.

\end{thebibliography}

\begin{dajauthors}
\begin{authorinfo}[hnm]
  Hossein Nassajian Mojarrad\\
  \'Ecole Polytechnique F\'ed\'erale de Lausanne\\
  Lausanne, Switzerland\\
  hossein\imagedot{}mojarrad\imageat{}epfl\imagedot{}ch \\
  \url{https://dcg.epfl.ch/hossein}
\end{authorinfo}
\begin{authorinfo}[tp]
  Thang Pham\\
  \'Ecole Polytechnique F\'ed\'erale de Lausanne\\
  Lausanne, Switzerland\\
  phamanhthang\imagedot{}vnu\imageat{}gmail\imagedot{}com \\
  \url{https://dcg.epfl.ch/thang}
\end{authorinfo}
\begin{authorinfo}[laci]
  Claudiu Valculescu\\
  \'Ecole Polytechnique F\'ed\'erale de Lausanne\\
  Lausanne, Switzerland\\
  claudio040788\imageat{}gmail\imagedot{}com \\
  \url{https://dcg.epfl.ch/page-100304-en.html}
\end{authorinfo}
\begin{authorinfo}[andy]
  Frank de Zeeuw\\
  \'Ecole Polytechnique F\'ed\'erale de Lausanne\\
  Lausanne, Switzerland\\
  fdezeeuw\imageat{}gmail\imagedot{}com\\
  \url{https://dcg.epfl.ch/page-84876-en.html}
\end{authorinfo}
\end{dajauthors}

\end{document}